\DeclareSymbolFont{cyrletters}{OT2}{wncyr}{m}{n}
\DeclareMathSymbol{\Sha}{\mathalpha}{cyrletters}{"58}
\newtheorem{theorem}{Theorem}[section]
\newtheorem{lemma}[theorem]{Lemma}
\newtheorem{proposition}[theorem]{Proposition}
\newtheorem{corollary}[theorem]{Corollary}
\theoremstyle{definition}
\newtheorem{remark}[theorem]{Remark}
\numberwithin{equation}{section} \numberwithin{figure}{section}
\DeclareMathOperator{\Pic}{Pic} 
 \DeclareMathOperator{\NS}{NS}
\DeclareMathOperator{\Spec}{Spec}
\DeclareMathOperator{\Hom}{Hom}
\DeclareMathOperator{\Br}{Br}
\newcommand{\CH}{\operatorname{CH}}
\newcommand{\pr}{\operatorname{pr}}
\newcommand{\cl}{\operatorname{cl}}
\newcommand\FF{\mathbb{F}}
\newcommand\PP{\mathbb{P}}
\newcommand\ZZ{\mathbb{Z}}
\newcommand\NN{\mathbb{N}}
\newcommand\QQ{\mathbb{Q}}
\newcommand\CC{\mathbb{C}}
\newcommand\GG{\mathbb{G}}
\newcommand\Gm{\GG_\mathrm{m}}
\newcommand\OO{\mathcal{O}}
\newcommand{\et}{\textrm{\'{e}t}}
\newcommand{\dR}{\textrm{dR}}
\newcommand{\crys}{\textrm{crys}}
\newcommand{\fppf}{\textrm{fppf}}
\begin{document}

\title[Invariants of Fano varieties]{Invariants of   Fano varieties in families}

\author{Frank Gounelas}
\address{Frank Gounelas\\
	Humboldt Universit\"at Berlin \\
	Berlin\\
	Germany.}
\email{gounelas@mathematik.hu-berlin.de}

\author{Ariyan Javanpeykar}
\address{Ariyan Javanpeykar \\
	Institut f\"{u}r Mathematik\\
	Johannes Gutenberg-Universit\"{a}t Mainz\\
	Staudingerweg 9, 55099 Mainz\\
	Germany.}
\email{peykar@uni-mainz.de}

\subjclass[2010]
{14J45, 
	(14K30,  
	14D23)}  

\keywords{Fano varieties, Picard rank, index, crystalline cohomology, rational connectedness.}

\begin{abstract} We show that the Picard rank is constant in families of Fano varieties (in arbitrary characteristic) and we moreover investigate the constancy of the index.  
\end{abstract}
\maketitle 

\section{Introduction}

The aim of this paper is to extend basic properties of invariants of complex algebraic (smooth) Fano varieties to
positive characteristic. 

For $X$ a smooth projective variety over a field $k$, the N\'eron-Severi group $\mathrm{NS}(X)$ is finitely generated.
We let $\rho(X)$ denote the rank of $\mathrm{NS}(X_{\bar{k}})$, where $\bar{k}$
is an algebraic closure of $k$. We refer to $\rho(X)$ as the (geometric) Picard rank of $X$. Recall that a smooth
projective geometrically connected variety $X$ over a field $k$ is a (smooth projective) Fano variety if the dual
$\omega_{X/k}^{\vee}$ of the canonical invertible sheaf is ample.  It is not hard to show that the Picard rank $\rho(X)$
of a Fano variety $X$ over $\mathbb C$ equals the second Betti number of $X$ \cite[Prop. 2.1.2]{IskPro}. In particular,
as Betti numbers are constant in smooth complex algebraic families, the Picard rank is constant in any complex algebraic
family of  Fano varieties parametrized by a connected variety. 

The constancy of the Picard rank in families plays an important role in the classification of Fano varieties. Motivated by the
classification problem for Fano varieties in positive characteristic, we show that the Picard rank is constant in
families of Fano varieties. 

Note that Fano varieties are rationally chain connected \cite{Campana, KMM}.  A proof of the following two results, using the decomposition of the diagonal, is well-known to experts; we include this proof in the appendix. 

\begin{theorem}\label{thm:picard_constant} 
    Let $X\to S $ be a smooth proper morphism of schemes whose geometric fibres are rationally chain connected smooth projective varieties. If $S$ is connected,
    then the geometric Picard rank is constant on the fibres, i.e., for all $s$ and $t$ in $S$, we have $$\rho(X_s) =
    \rho(X_t).$$
\end{theorem}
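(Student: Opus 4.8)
The plan is to prove the stronger fibrewise statement that on each geometric fibre the Picard rank coincides with the second $\ell$-adic Betti number, and then to invoke the local constancy of Betti numbers in smooth proper families. Fix a prime $\ell$ invertible on $S$; since constancy of $\rho$ on a connected base is a local question, it is harmless to choose $\ell$ locally. By the smooth and proper base change theorem the sheaf $R^2 f_* \mathbb{Q}_\ell$ is lisse on $S$, so its rank $b_2(X_s) := \dim_{\mathbb{Q}_\ell} H^2_{\et}(X_s, \mathbb{Q}_\ell)$ is locally constant, hence constant on the connected scheme $S$. It therefore suffices to show $\rho(Y) = b_2(Y)$ for each geometric fibre $Y$, a smooth projective rationally chain connected variety over an algebraically closed field.

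For this I would use the Bloch--Srinivas decomposition of the diagonal. As rational chain connectedness is preserved under algebraically closed base change, the variety $Y_{\overline{k(Y)}}$ is again rationally chain connected; and since any two closed points of a rationally chain connected variety over an algebraically closed field are rationally equivalent, we have $\CH_0(Y_{\overline{k(Y)}}) = \mathbb{Z}$. The Bloch--Srinivas argument then yields a positive integer $m$ and a decomposition
$$ m\,[\Delta_Y] = \Gamma_1 + \Gamma_2 \in \CH^{d}(Y\times Y), \qquad d = \dim Y, $$
where $\Gamma_2$ is supported on $Y \times \{y_0\}$ for a point $y_0$, and $\Gamma_1$ is supported on $D \times Y$ for some divisor $D \subsetneq Y$.

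The next step is to let both sides act as correspondences on $H^2_{\et}(Y, \mathbb{Q}_\ell(1))$. The diagonal acts as the identity, while $\Gamma_2$, being supported on $Y \times \{y_0\}$, annihilates $H^j$ for $j < 2d$ and in particular kills $H^2$. Hence $(\Gamma_1)_* = m \cdot \mathrm{id}$ on $H^2_{\et}(Y, \mathbb{Q}_\ell(1))$. On the other hand, a correspondence supported on $D \times Y$ factors its action through the cohomology of (an alteration of) the divisor $D$, so its image on $H^2$ lies in the subspace generated by cycle classes of divisors, that is, in the image of the cycle class map $c \colon \mathrm{NS}(Y)\otimes \mathbb{Q}_\ell \to H^2_{\et}(Y,\mathbb{Q}_\ell(1))$. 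Since $m$ is invertible in $\mathbb{Q}_\ell$, this forces $c$ to be surjective. As $c$ is always injective (a divisor class with trivial $\ell$-adic class is numerically trivial, hence zero in $\mathrm{NS}(Y)\otimes\mathbb{Q}$), the map $c$ is an isomorphism and $\rho(Y) = b_2(Y)$, completing the argument.

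The main obstacle is the last geometric step: making precise that a correspondence supported on $D\times Y$ sends $H^2$ into the span of algebraic classes. This requires choosing a proper alteration (de Jong) $\widetilde{D}\to D$ with smooth source in arbitrary characteristic, factoring $(\Gamma_1)_*$ through the cohomology of $\widetilde{D}$ and its pushforward to $Y$, and verifying the compatibility of cycle class maps under these pushforwards. One must also check that the integer $m$ and the passage to $\mathbb{Q}_\ell$-coefficients cause no difficulty, and that the prime $\ell$ (invertible on $S$, which in mixed characteristic means coprime to the residue characteristics involved) can be chosen compatibly along the chain of generizations and specializations linking two arbitrary fibres.
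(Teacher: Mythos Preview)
Your proposal is correct and coincides with the argument the paper gives in its appendix (Theorem~\ref{thm:appendix}): the decomposition of the diagonal \`a la Bloch--Srinivas, applied to a rationally chain connected variety, forces the cycle class map in degree~$2$ to be an isomorphism (rationally), and then smooth proper base change gives constancy of $b_2$ and hence of~$\rho$. Your handling of the prime~$\ell$ (comparing two fibres via a chain of specializations and choosing $\ell$ avoiding the two residue characteristics at each step) is exactly what the paper does in its proof of Theorem~\ref{thm:picard_constant}.

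The main text, however, takes a different route to the key fibrewise statement $\rho(Y)=b_2(Y)$. Rather than invoking Bloch--Srinivas and correspondences, it proves directly and geometrically that the Brauer group of a rationally chain connected variety over an algebraically closed field is annihilated by some positive integer (Proposition~\ref{prop:Brauer_is_killed}). The idea is that through a general point one can sweep out $X$ by a family $Z\times T\to X$ with $T$ a compact type genus-zero curve; one then shows $\Br(Z)\cong\Br(Z\times T)$ by an elementary induction on the number of components of $T$ (Sections~\ref{section:compact_type}--\ref{section:cycle_class_map}), and a restriction--corestriction argument for the generically finite map $Z\times T\to X$ finishes. From the vanishing of $T_\ell\Br(X)$ the Kummer sequence gives $\NS(X)\otimes\ZZ_\ell\cong\mathrm{H}^2_{\fppf}(X,\ZZ_\ell(1))$ for \emph{all}~$\ell$, which is a bit sharper than the rational statement your argument yields. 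The trade-off is that your approach is shorter and uses standard machinery, whereas the paper's main argument is self-contained and avoids both correspondences and alterations of the support divisor (in the appendix the paper likewise sidesteps alterations by pushing forward from $\mathrm{H}^0_{\fppf}(D,\mu_d)$ directly).
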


We were first led to investigate this problem when studying integral points on the complex algebraic stack of Fano varieties; see \cite[\S 2]{JL}. The results obtained  in \textit{loc.~ cit.} use deformation-theoretic techniques and only deal with characteristic zero or mixed characteristic. In fact, in \textit{loc.~cit.} Kodaira vanishing plays a crucial role. However, there are (smooth) Fano varieties in characteristic two which violate Kodaira vanishing \cite{LauritzenRao}. 

It is not difficult  to see that the Picard rank can jump in families of non-Fano varieties (e.g.\ K3 surfaces). However, note that the Picard rank is constant  in families of Enriques surfaces \cite{Liedtke}. (The constancy of the Picard rank in families of Enriques surfaces also follows from the results in \cite{Auel}.)

By the smooth proper base change theorem for \'etale cohomology, to prove Theorem \ref{thm:picard_constant}, it suffices
to show that the Picard rank equals the second $\ell$-adic Betti number for all prime numbers $\ell$ invertible in the
base field. As Fano varieties are rationally chain connected, the latter follows from the    next result.

\begin{theorem}\label{thm:tate}
	Let $X$ be a smooth projective rationally chain connected variety over an algebraically closed field $k$. For all prime numbers $\ell$, the homomorphism
		\[ \mathrm{NS}(X)\otimes\ZZ_\ell \to \mathrm{H}^2_{\mathrm{fppf}}(X, \ZZ_\ell(1)) \] is an isomorphism of $\ZZ_\ell$-modules.
\end{theorem}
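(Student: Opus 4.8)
The plan is to reduce the integral assertion to the much softer rational statement that the relevant second Betti number equals the Picard rank, and to extract the latter from the decomposition of the diagonal. First I would feed the Kummer sequence $0\to\mu_{\ell^n}\to\Gm\xrightarrow{\ell^n}\Gm\to0$ (\'etale when $\ell\neq\chr k$, and genuinely flat when $\ell=\chr k$) into cohomology, obtaining short exact sequences $0\to\Pic(X)/\ell^n\to H^2_{\fppf}(X,\mu_{\ell^n})\to\Br(X)[\ell^n]\to0$. Since $X$ is rationally chain connected its Albanese vanishes, so $\Pic^0_{X/k,\mathrm{red}}=0$; as an infinitesimal group scheme over $k$ has no nontrivial $k$-points, $\Pic^0(X)=0$ and hence $\Pic(X)=\NS(X)$ is finitely generated. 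Passing to the inverse limit over $n$ (the groups $\NS(X)/\ell^n$ being finite, so that the relevant $\varprojlim^1$ vanishes) yields the exact sequence
\[ 0\to\NS(X)\otimes\ZZ_\ell\xrightarrow{\ c\ }H^2_{\fppf}(X,\ZZ_\ell(1))\to T_\ell\Br(X)\to0, \]
in which $c$ is the map of the statement. Thus $c$ is already injective, and it is an isomorphism precisely when the $\ell$-adic Tate module $T_\ell\Br(X)$ vanishes.

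The vanishing of $T_\ell\Br(X)$ is then formal, granted the rational version. A Tate module is torsion-free, and here it is finitely generated over $\ZZ_\ell$, being a quotient of the finitely generated module $H^2_{\fppf}(X,\ZZ_\ell(1))$; a finitely generated torsion-free $\ZZ_\ell$-module is free, so it vanishes as soon as its rank is zero. Tensoring the displayed sequence with $\QQ_\ell$ gives $\rank_{\ZZ_\ell}T_\ell\Br(X)=\dim_{\QQ_\ell}H^2_{\fppf}(X,\QQ_\ell(1))-\rho(X)$. It therefore suffices to prove that $c\otimes\QQ_\ell$ is surjective, i.e.\ that $H^2_{\fppf}(X,\QQ_\ell(1))=\NS(X)\otimes\QQ_\ell$.

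For this rational statement I would invoke the decomposition of the diagonal. As $k$ is algebraically closed and $X$ is rationally chain connected, every closed point is rationally equivalent to a fixed point $x_0$, whence $\CH_0(X)=\ZZ$; the Bloch--Srinivas argument then produces an integer $N\geq1$, a divisor $D\subsetneq X$, and a decomposition $N[\Delta_X]=[\Gamma_1]+[\Gamma_2]$ in $\CH^{\dim X}(X\times X)$ with $\Gamma_1$ supported on $X\times\{x_0\}$ and $\Gamma_2$ supported on $D\times X$. Letting correspondences act on $H^2_{\fppf}(X,\QQ_\ell(1))$ by $\alpha\mapsto p_{1*}(p_2^*\alpha\cup[\,\cdot\,])$, the operator $N[\Delta_X]$ is multiplication by $N$, the contribution of $\Gamma_1$ vanishes for degree reasons, and the image of $\Gamma_2$ is supported on $D$, hence lies, via the Gysin map for an alteration $\widetilde{D}\to D$, in the span of the classes of the components of $D$, that is, in $\NS(X)\otimes\QQ_\ell$. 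This forces $H^2_{\fppf}(X,\QQ_\ell(1))=\NS(X)\otimes\QQ_\ell$ and completes the reduction.

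The main obstacle is to carry out the previous paragraph when $\ell=\chr k$, where $\mu_{\ell^n}$ is not \'etale and one is computing honest flat cohomology. The formalism that is routine in $\ell$-adic \'etale cohomology for $\ell\neq\chr k$---cycle classes, Gysin maps, the projection formula, and the action of algebraic correspondences---must be established for $H^2_{\fppf}(-,\ZZ_p(1))$, presumably by comparison with crystalline or de Rham--Witt cohomology, and this is where I expect the real work to lie; it is also why the theorem is phrased in terms of flat cohomology. A minor point, harmless because the argument is rational, is that in positive characteristic $D$ need only admit an alteration rather than a resolution, so the degree of $\widetilde{D}\to D$ is simply absorbed into $N$.
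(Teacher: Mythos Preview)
Your argument is essentially correct and coincides with the \emph{appendix} proof in the paper, not the main-text proof. The paper's primary argument is purely geometric: it constructs, from the rational chain connectedness of $X$, a compact type genus-zero curve $T$ (a tree of $\mathbb{P}^1$'s), a smooth variety $Z$ of dimension $\dim X-1$, and a dominant generically finite map $F:Z\times T\to X$ which contracts a section $s:Z\to Z\times T$ to a single point $x\in X$. Elementary Brauer-group computations for such products (proved by induction on the number of components of $T$ via partial normalization) give $\Br(Z)\cong\Br(Z\times T)$, so $F^*:\Br(X)\to\Br(Z\times T)$ is the zero map; on the other hand its kernel is killed by the generic degree $d$ of $F$. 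Hence $\Br(X)$ is $d$-torsion, and $T_\ell\Br(X)=0$ for every $\ell$ immediately. This route never touches cycle classes or correspondences in flat cohomology, which is exactly the obstacle you flag in your last paragraph.

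Your decomposition-of-the-diagonal approach is the one the paper records in its appendix, and it has the advantage of applying to any $X$ with $\CH_0(X_\Omega)=\ZZ$ over a universal domain (note: Bloch--Srinivas needs this over $\Omega\supset k(X)$, not merely over $k$; you should say that rational chain connectedness is preserved under base change to justify it). Two refinements are worth noting. First, your appeal to finite generation of $H^2_{\mathrm{fppf}}(X,\ZZ_p(1))$ is unnecessary and not entirely obvious at $\ell=p$: torsion-freeness of $T_\ell\Br(X)$ alone gives an injection $T_\ell\Br(X)\hookrightarrow T_\ell\Br(X)\otimes\QQ_\ell$, so vanishing after $\otimes\,\QQ_\ell$ already forces $T_\ell\Br(X)=0$. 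Second, and more to the point, the paper's appendix runs the correspondence argument \emph{integrally} on $H^2_{\mathrm{fppf}}(X,\mu_d)$ for each $d$, showing directly that multiplication by the Bloch--Srinivas integer $m$ on $\Br(X)[d]$ factors through classes of divisors and is therefore zero; this yields $m\cdot\Br(X)=0$ without any rational detour, and the required fppf cycle class map is built by hand from $c_1$ of line bundles.
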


For $X$ a scheme, we let $\Br(X)$ be the cohomological Brauer group $\mathrm{H}^2_{\et}(X,\mathbb G_m)$. Note
that, if $X$ is  a regular scheme, then $\Br(X)$ is a torsion abelian group \cite[Prop. 1.4]{GrothendieckBrauerII}. 
To prove Theorem \ref{thm:tate}, we use  geometric arguments following a suggestion of Jason Starr.   Namely, we use simple facts about Brauer groups of compact type curves (see Section \ref{section:compact_type}) to prove that the Brauer group of a rationally chain connected variety is killed by some integer $d\geq 1$ (see Proposition \ref{prop:Brauer_is_killed}). A well-known argument involving the Kummer sequence then concludes the proof of Theorem \ref{thm:tate} (and thus Theorem \ref{thm:picard_constant}).

Another natural invariant of a Fano variety is its  index.
Let $X$ be a Fano variety over a field $k$. We define its \emph{(geometric) index}
$r(X)$ to be the largest $r \in \NN$ such that $-K_{X_{\bar {k } } }$ is divisible by $r$ in $\Pic X_{\bar{k}}$. It is not hard to show that the index is constant in families of Fano varieties over the complex numbers.

It seems reasonable to suspect that the index is constant in families of Fano varieties in mixed or positive characteristic; see Proposition \ref{prop:index_almost_2} for some partial results. In general, we are only able to establish the constancy of the index up to multiplication by powers of the characteristic of the residue field. 

\begin{theorem}\label{thm:index_almost} 
Let $S$ be a trait  with generic point $\eta$. Assume that the characteristic $p$ of the closed point $s$ of $S$ is
positive. Let $X\to S$ be a smooth proper morphism of schemes whose geometric fibres are Fano varieties. Then, there is
an integer $i\geq 0$ such that $r(X_s) = p^i r(X_\eta)$.
\end{theorem}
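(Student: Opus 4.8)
The plan is to construct a specialization homomorphism on geometric Picard groups and then read off the index one prime at a time, using Theorem~\ref{thm:tate} and smooth proper base change to rigidify every prime except $p$. Since the fibres are Fano, $\mathrm{H}^i(X_t,\OO_{X_t})=0$ for $i>0$, so $\mathrm{Pic}^0$ vanishes and the Picard group of every geometric fibre is a finitely generated abelian group equal to its N\'eron--Severi group. As $-K$ is ample its class is nonzero in $\Pic\otimes\QQ$, so each index is a finite product $r(X_{\bar t})=\prod_q q^{d_q(X_{\bar t})}$, where $d_q(X_{\bar t})$ is the largest $n$ with $-K_{X_{\bar t}}$ divisible by $q^{n}$ in $\Pic(X_{\bar t})$. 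Because divisibility of a fixed element in a finitely generated abelian group is multiplicative over distinct primes (a B\'ezout argument), it suffices to compare these valuations fibrewise: I will show $d_q(X_{\bar s})=d_q(X_{\bar\eta})$ for $q\ne p$ and $d_p(X_{\bar s})\ge d_p(X_{\bar\eta})$, which yields $r(X_s)=p^{\,d_p(X_{\bar s})-d_p(X_{\bar\eta})}\,r(X_\eta)$.

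First I would build the specialization map $sp\colon\Pic(X_{\bar\eta})\to\Pic(X_{\bar s})$. For a finite extension $L/K=\kappa(\eta)$ let $R_L$ be a discrete valuation ring dominating $R$ with fraction field $L$; since $X\to S$ is smooth and $S$ is regular, $X_{R_L}$ is regular, its special fibre is a principal (geometrically integral) prime divisor, and the localization sequence shows the restriction $\Pic(X_{R_L})\xrightarrow{\sim}\Pic(X_L)$ is an isomorphism. Composing its inverse with restriction to the special fibre and passing to the colimit over $L$ (realizing all finite separable residue extensions, then base changing to $\overline{\kappa(s)}$) produces a group homomorphism $sp$. Crucially, the relative anticanonical bundle $\omega_{X/S}^{\vee}$ lies in $\Pic(X_{R_L})$ and restricts to $-K$ on each fibre, so $sp(-K_{X_{\bar\eta}})=-K_{X_{\bar s}}$. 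Being a homomorphism carrying $-K$ to $-K$, $sp$ immediately gives $d_q(X_{\bar s})\ge d_q(X_{\bar\eta})$ for \emph{every} prime $q$: if $-K_{X_{\bar\eta}}=q^{n}L$ then $-K_{X_{\bar s}}=q^{n}\,sp(L)$.

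The reverse inequality for $q=\ell\ne p$ is where Theorem~\ref{thm:tate} enters. Since $d_\ell$ is detected in $\Pic\otimes\ZZ_\ell$ (because $\Pic/\ell^{n}=(\Pic\otimes\ZZ_\ell)/\ell^{n}$), it suffices to prove that $sp\otimes\ZZ_\ell$ is an isomorphism for $\ell\ne p$. I would place $sp\otimes\ZZ_\ell$ in the commutative square whose vertical arrows are the cycle class maps and whose bottom edge is the smooth proper base change isomorphism $\mathrm{H}^2_{\et}(X_{\bar s},\ZZ_\ell(1))\xrightarrow{\sim}\mathrm{H}^2_{\et}(X_{\bar\eta},\ZZ_\ell(1))$, which is compatible with $sp$ since cycle classes commute with specialization. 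For $\ell\ne p$ the group $\mu_{\ell^{n}}$ is \'etale, so $\mathrm{H}^2_{\fppf}=\mathrm{H}^2_{\et}$ and Theorem~\ref{thm:tate} makes both vertical maps isomorphisms; a diagram chase then forces $sp\otimes\ZZ_\ell$ to be an isomorphism carrying the class of $-K_{X_{\bar\eta}}$ to that of $-K_{X_{\bar s}}$. Hence $d_\ell(X_{\bar s})=d_\ell(X_{\bar\eta})$ for all $\ell\ne p$, and combined with the previous paragraph we obtain $r(X_s)=p^{i}r(X_\eta)$ with $i=d_p(X_{\bar s})-d_p(X_{\bar\eta})\ge 0$.

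The main obstacle I anticipate is the careful construction and well-definedness of $sp$ on geometric Picard groups, together with its compatibility with the cycle class map and with smooth proper base change; in particular the passage from the separably closed to the algebraically closed residue field. This last step is harmless away from $p$, because \'etale cohomology is invariant under the purely inseparable base change $\kappa(s)^{s}\to\overline{\kappa(s)}$, so the comparison with $\mathrm{H}^2_{\et}(-,\ZZ_\ell(1))$ is unaffected; any genuinely inseparable contribution to the divisibility of $-K$ can only be a power of $p$, which is precisely what the exponent $i$ absorbs.
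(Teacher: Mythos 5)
Your argument is essentially the paper's own proof, written out in more detail: the paper likewise combines the isomorphism $\Pic(X)\xrightarrow{\sim}\Pic(X_\eta)$ (which, with restriction to the special fibre, is exactly your specialization map), Theorem~\ref{thm:tate} applied to both geometric fibres, and smooth proper base change to pin down all $\ell$-adic divisibilities for $\ell\neq p$, leaving only a power of $p$ unaccounted for. One justification you give is false as stated, however: the vanishing $\mathrm{H}^i(X_t,\OO_{X_t})=0$ for $i>0$ is \emph{not} known for Fano varieties in positive characteristic --- the paper explicitly points out that even $\mathrm{H}^1(X,\OO_X)=0$ is open. What you actually need (that $\Pic^0$ is trivial, so $\Pic=\NS$ is finitely generated) is still true, but for a different reason: a rationally chain connected variety has trivial Albanese, hence $\mathbf{Pic}^0_{X/k,\mathrm{red}}=0$. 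With that substitution the proof is sound.
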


In the hope of establishing the constancy of the index for families of Fano varieties, we investigate $p$-adic
``crystalline'' analogues of Theorem \ref{thm:tate}. For a perfect field $k$, we let $W = W(k)$ be the Witt ring of $k$. 

\begin{theorem}\label{thm:src}
    Let $X$ be a smooth projective connected scheme over an algebraically closed field $k$ of
    characteristic $p>0$. If $X$ is separably rationally connected or $X$ is a Fano variety with $\mathrm{H}^0(X,\Omega^1_X) =0$, then $\mathrm{H}^2_{\mathrm{crys}}(X/W)$ is torsion-free, $\mathrm{H}^1(X,\mathcal{O}_X)=0$, and the morphisms of $\mathbb Z_p$-modules 
        \[ \mathrm{NS}(X)\otimes \ZZ_p \to \mathrm{H}^2_{\mathrm{fppf}}(X,\mathbb Z_p(1))\to  \mathrm{H}^2_{\mathrm{crys}}(X/W)^{F-p}\] are isomorphisms.
\end{theorem}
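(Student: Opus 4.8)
The plan is to derive everything from a single integral input, namely the torsion-freeness of $\mathrm{H}^2_{\crys}(X/W)$ (with $K=\Frac(W)$); the two displayed isomorphisms and the vanishing $\mathrm{H}^1(X,\mathcal{O}_X)=0$ will then follow formally. First I record what both hypotheses give for free. A separably rationally connected variety, and a Fano variety by assumption, has $\mathrm{H}^0(X,\Omega^1_X)=0$ (in the first case this is the absence of global one-forms). Moreover both classes of varieties are rationally chain connected, so Theorem~\ref{thm:tate} applies and already supplies the first arrow $\mathrm{NS}(X)\otimes\ZZ_p\to\mathrm{H}^2_{\fppf}(X,\ZZ_p(1))$ as an isomorphism (the case $\ell=p$); comparing with the $\ell$-adic statement for $\ell\neq p$ and smooth proper base change gives $\rho(X)=b_2(X)$.

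Next I set up the chain of reductions. Since $X$ is rationally chain connected, $b_1(X)=0$, so $\mathrm{H}^1_{\crys}(X/W)\otimes_W K=0$; as $\mathrm{H}^1_{\crys}(X/W)$ is always torsion-free, it vanishes. The universal coefficient sequence then gives $\mathrm{H}^1_{\dR}(X/k)\cong\mathrm{H}^2_{\crys}(X/W)[p]$, while the conjugate spectral sequence furnishes a canonical subspace of $\mathrm{H}^1_{\dR}(X/k)$ of dimension $\dim_k\mathrm{H}^1(X,\mathcal{O}_X)$ (the bottom step of the conjugate filtration, using $\mathrm{H}^0(X,\Omega^1_X)=0$ to see there is nothing else in degree one). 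Thus torsion-freeness of $\mathrm{H}^2_{\crys}(X/W)$ will immediately yield $\mathrm{H}^1(X,\mathcal{O}_X)=0$. Independently, the $\rho=b_2$ algebraic classes land in $\mathrm{H}^2_{\crys}(X/W)^{F-p}$ and are $K$-linearly independent (by compatibility of the crystalline and $\ell$-adic cycle class maps), so they exhaust $\mathrm{H}^2_{\crys}(X/W)\otimes_W K$; hence every slope of $\mathrm{H}^2_{\crys}(X/W)$ equals $1$.

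It remains to prove torsion-freeness of $\mathrm{H}^2_{\crys}(X/W)$, which I expect to be the main obstacle. In the separably rationally connected case I would invoke the theorem of Chatzistamatiou–Rülling that the Witt-vector cohomology $\mathrm{H}^i(X,W\mathcal{O}_X)$ vanishes for $i>0$; this kills the slope-$<1$ torsion and, with the previous paragraph, forces $\mathrm{H}^2_{\crys}(X/W)[p]=0$. The Fano case is genuinely harder, since here $-K_X$ is ample but $X$ need not be separably rationally connected and Kodaira vanishing may fail. The quantity I must control is the infinitesimal part of $\Pic^0_X$: by the above $\mathrm{H}^1(X,\mathcal{O}_X)=\mathrm{Lie}\,\Pic^0_X$, and $\Pic^0_X$ is a finite group scheme since $b_1=0$ kills its abelian part. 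The cautionary examples are the non-classical Enriques surfaces, for which $\mathrm{H}^0(\Omega^1)=b_1=0$ and yet $\Pic^0$ contains $\mu_p$ or $\alpha_p$ and $\mathrm{H}^2_{\crys}$ has torsion; these are excluded here only because they are not Fano. The plan is to use $\mathrm{H}^0(X,\Omega^1_X)=0$ together with the ampleness of $-K_X$ to rule out a nontrivial infinitesimal subgroup scheme of $\Pic^0_X$ --- for instance by analysing the associated cyclic ($\mu_p$- or $\alpha_p$-) cover $\widetilde X\to X$, whose canonical bundle pulls back from $-K_X$ and whose existence is incompatible with the Frobenius positivity forced by Fano-ness --- thereby concluding $\Pic^0_X=0$ and hence that $\mathrm{H}^2_{\crys}(X/W)$ is torsion-free.

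Finally, granting torsion-freeness and that all slopes are $1$, I would produce the second isomorphism through the de Rham–Witt complex, following Illusie. Flat Kummer theory identifies $\mathrm{H}^2_{\fppf}(X,\ZZ_p(1))$ with the logarithmic Hodge–Witt cohomology $\mathrm{H}^1(X,W\Omega^1_{X,\log})$, and the Artin–Schreier–Witt sequence $0\to W\Omega^1_{X,\log}\to W\Omega^1_X\xrightarrow{1-F}W\Omega^1_X\to 0$ identifies the latter with $\ker\!\bigl(1-F\mid\mathrm{H}^1(X,W\Omega^1_X)\bigr)$. Under the Tate normalisation the de Rham–Witt $F$ on $\mathrm{H}^1(X,W\Omega^1_X)$ corresponds to $p^{-1}$ times the crystalline Frobenius, so via the slope spectral sequence --- which degenerates integrally because $\mathrm{H}^2_{\crys}(X/W)$ is torsion-free and carries only slope $1$ --- this kernel is exactly $\mathrm{H}^2_{\crys}(X/W)^{F-p}$. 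Combining with Theorem~\ref{thm:tate} gives the asserted chain of isomorphisms, and the already-established $\mathrm{H}^1(X,\mathcal{O}_X)=0$ completes the proof. The delicate point throughout is integral (as opposed to rational) control of $p$-torsion, concentrated in the Fano step above.
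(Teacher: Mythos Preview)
Your proposal has a genuine gap in the Fano case, and the paper's argument shows that you have inverted the natural order of the reductions. You try to prove torsion-freeness of $\mathrm{H}^2_{\crys}(X/W)$ first and deduce $\mathrm{H}^1(X,\mathcal{O}_X)=0$ from it; for Fano $X$ you then have no actual argument for torsion-freeness, only a sketch involving $\mu_p$- or $\alpha_p$-covers and an unspecified ``Frobenius positivity'' principle. That sketch is not a proof, and you correctly flag it as the delicate point.

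The paper proceeds in the opposite direction and thereby treats both cases uniformly with a short, elementary argument. One first proves $\mathrm{H}^1(X,\mathcal{O}_X)=0$ directly: since $\pi_1^{\et}(X)$ is finite with trivial $p$-part (finiteness for rationally chain connected $X$; the $p$-part by Esnault's theorem, which applies to Fano varieties), one has $\mathrm{H}^1_{\et}(X,\FF_p)=0$, so Frobenius acts nilpotently on $\mathrm{H}^1(X,\mathcal{O}_X)$. On the other hand, the kernel of Frobenius on $\mathrm{H}^1(X,\mathcal{O}_X)$ is $\mathrm{H}^0(X,\mathcal{O}_X/F\mathcal{O}_X)\subset \mathrm{H}^0(X,\Omega^1_X)=0$, so Frobenius is also injective. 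Nilpotent and injective forces $\mathrm{H}^1(X,\mathcal{O}_X)=0$. Now the Fr\"olicher spectral sequence gives $\mathrm{h}^1_{\dR}(X/k)\le \mathrm{h}^{1,0}+\mathrm{h}^{0,1}=0$, and the universal coefficient sequence
\[
0\to \mathrm{H}^1_{\crys}(X/W)\otimes k\to \mathrm{H}^1_{\dR}(X/k)\to \mathrm{Tor}_1^W(\mathrm{H}^2_{\crys}(X/W),k)\to 0
\]
then yields torsion-freeness of $\mathrm{H}^2_{\crys}(X/W)$. This replaces your entire Fano step (and your appeal to Chatzistamatiou--R\"ulling in the SRC case) with two lines.

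Your final paragraph, identifying $\mathrm{H}^2_{\fppf}(X,\ZZ_p(1))$ with $\mathrm{H}^2_{\crys}(X/W)^{F-p}$ via logarithmic Hodge--Witt cohomology once torsion-freeness is known, is essentially correct and is what the paper invokes from Illusie; note that one also uses $\mathrm{H}^0(X,Z_1\Omega^1_X)=0$, which follows from $\mathrm{H}^0(X,\Omega^1_X)=0$.
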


It is currently not known whether $\mathrm{H}^1(X,\mathcal{O}_X)=0$, for $X$ a Fano variety over an algebraically closed field $k$.  
In \cite{ShepherdBarron} Shepherd-Barron proves that $\mathrm{H}^1(X,\mathcal{O}_X)=0$ for all Fano varieties $X$ of dimension at most three (see also \cite[Corollary~2]{Megyesi}). 

\subsection*{Acknowledgements} The authors would like to thank Jason Starr for sketching how an argument for proving Proposition \ref{prop:Brauer_is_killed}  should work. We are most grateful to Jean-Louis Colliot-Th\'el\`ene and Charles Vial for their comments and suggestions, and for pointing our a mistake in our proof of Theorem \ref{thm:appendix} in an earlier version. We also thank Ben Bakker, Francois
Charles, Cristian Gonz\'alez-Avil\'es, Ofer Gabber, Raju Krishnamoorthy, Christian Liedtke, Daniel Loughran, and Kay R\"ulling for
helpful discussions. The second named author gratefully acknowledges support from SFB/Transregio 45.

\section{Brauer groups of compact type curves}\label{section:compact_type} 
In this section, we let $k$ be a  field (of arbitrary characteristic). The aim of this section is to investigate Brauer groups of mildly singular curves over $k$.  Similar statements (assuming $k$ is of characteristic zero) are obtained by Harpaz-Skorobogatov \cite{HarpazSkoro}.

A proper   connected   reduced one-dimensional scheme over $k$ with only ordinary double points is a \textit{curve of compact type} if its dual graph is a tree. In other words, a nodal proper       connected reduced curve over $k$ is of compact type if and only if every node is disconnecting. 
Note that  the normalization of a compact type curve is the disjoint union of its irreducible components.

We will frequently use ``partial normalizations''. More precisely, let $s$ in $C(k)$ be a singular point of $C$. Since $s$ is a disconnecting node, there are precisely two closed subschemes $C_1$ and $C_2$ such that $s\in C_1(k)$, $s\in C_2(k)$, $C= C_1\cup C_2$, and $C_1 \cap C_2 = \{s\}$. Note that $C_1$ and $C_2$ are unique (up to renumbering). We define the \textit{partial normalization of $C$ with respect to $s$} to be the morphism $C'\to C$, where $C'= C_1 \sqcup C_2$. It is clear that the normalization $\widetilde{C} \to C$ of $C$ factors as $\widetilde{C}\to C'\to C$.
 
\begin{lemma}\label{lem:norm}
    Let $C$ be a compact type curve over $k$ and $Z$ an   integral scheme of finite type over $k$. Let
    $0\in C(k)$ be a singular point and let $C'\to C$ be the partial normalization with respect to $0$. Then the
    pullback morphism $\Br(Z\times C) \to \Br(Z\times C')$ is injective.
\end{lemma}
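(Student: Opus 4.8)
The plan is to analyze the partial normalization $C' = C_1 \sqcup C_2 \to C$ via a Mayer–Vietoris-type comparison. Since $C = C_1 \cup C_2$ with $C_1 \cap C_2 = \{0\}$, after taking products with $Z$ we have $Z \times C = (Z \times C_1) \cup (Z \times C_2)$ glued along $Z \times \{0\} \cong Z$. The injectivity of $\Br(Z \times C) \to \Br(Z \times C')= \Br(Z\times C_1)\oplus \Br(Z\times C_2)$ should be extracted from the long exact sequence relating the cohomology of the glued scheme to that of its pieces and their intersection. Concretely, I would look at the commutative square (a Milnor/Mayer–Vietoris square, since $Z\times C'\to Z\times C$ is finite and is an isomorphism away from $Z\times\{0\}$, identifying the two preimages of $0$) and derive the associated long exact sequence in \'etale cohomology with $\Gm$-coefficients.

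First I would set up the exact gluing sequence. The map $Z\times C'\to Z\times C$ is the pushout/partial normalization, so there is a conductor square; I expect a long exact Mayer–Vietoris sequence of the shape
\[
\cdots \to \mathrm{H}^1_{\et}(Z\times C',\Gm) \to \mathrm{H}^1_{\et}(Z,\Gm) \to \mathrm{H}^2_{\et}(Z\times C,\Gm) \to \mathrm{H}^2_{\et}(Z\times C',\Gm) \to \cdots
\]
where the term $\mathrm{H}^i_{\et}(Z,\Gm)$ arises from the intersection locus $Z\times\{0\}\cong Z$. The kernel of $\Br(Z\times C)\to\Br(Z\times C')$ is then a quotient of $\mathrm{H}^1_{\et}(Z,\Gm)=\Pic(Z)$ (more precisely, the cokernel of the map $\mathrm{H}^1_{\et}(Z\times C',\Gm)\to \mathrm{H}^1_{\et}(Z,\Gm)$). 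So the crux is to show this connecting map $\Pic(Z\times C')\to\Pic(Z)$, given by restricting a line bundle to the two copies of $Z$ sitting over $0$ inside $Z\times C_1$ and $Z\times C_2$ and taking their difference, is \emph{surjective}.

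The surjectivity I would establish as follows. A line bundle on $Z\times C_i$ restricts to the fiber $Z\times\{0\}\cong Z$, and I need the combined restriction map to hit all of $\Pic(Z)$. Since $C_i$ contains the $k$-rational point $0$ with a section $Z\cong Z\times\{0\}\inj Z\times C_i$, pullback along the projection $Z\times C_i\to Z$ followed by restriction to $Z\times\{0\}$ is the identity on $\Pic(Z)$. Thus for any $L\in\Pic(Z)$, the bundle $\pr_Z^* L$ on $Z\times C_1$ already restricts to $L$ on the gluing locus, so the difference map is surjective (indeed split). This is exactly the point where the existence of the $k$-rational singular point $0$ is used, and it forces the connecting map to be surjective, hence the kernel of $\Br(Z\times C)\to\Br(Z\times C')$ to vanish.

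\textbf{The main obstacle} I anticipate is making the gluing/Mayer–Vietoris sequence rigorous in the \'etale topology for a possibly singular pushout: one must justify that $Z\times C$ is the pushout of $Z\times C_1 \leftarrow Z \to Z\times C_2$ in a way compatible with \'etale cohomology of $\Gm$, and that the resulting long exact sequence has precisely the intersection term $\mathrm{H}^\bullet_{\et}(Z,\Gm)$ contributing. This can be handled either by descent for the finite morphism $Z\times C'\to Z\times C$ (comparing $\Gm$ on the base with its pushforward and using the conductor square), or by invoking the known exact sequence for nodal gluings; since $Z$ is integral of finite type and the node is a single $k$-rational point, the intersection is clean and the relevant excision applies. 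Once the sequence and the identification of the intersection term are in place, the splitting argument above closes the proof immediately.
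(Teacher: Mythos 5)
Your proposal is correct and follows essentially the same route as the paper: the paper makes your Mayer--Vietoris sequence precise via the short exact sequence of \'etale sheaves $0\to\Gm\to\nu_*\Gm\to j_*\Gm\to 0$ on $Z\times C$ (using finiteness of $\nu$ and $j$ to identify the cohomology of the pushforwards), which yields exactly the four-term sequence $\Pic(Z\times C')\to\Pic(Z)\to\Br(Z\times C)\to\Br(Z\times C')$ you describe. Your surjectivity argument for the connecting map --- pulling back $L$ along the projection $Z\times C_1\to Z$ and taking the trivial bundle on $Z\times C_2$ --- is precisely the paper's concluding step.
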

\begin{proof}
    Note that $C'$ is the disjoint union of two compact type curves, $C_1$ and $C_2$ say. Also, note that $0\in
    C(k)$ induces a point $ 0_1\in C_1(k)$ and a point $0_2\in C_2(k)$. Let $Y= Z\times C'$ and let $\nu: Y\to Z\times C$
    be the morphism induced by the partial normalization $C'\to C$. Note that $Y$ is the disjoint union of $Y_1:=Z\times C_1$ and
    $Y_2:=Z\times C_2$. Let $j:Z\cong Z\times \{0\}\to Z\times C$ be the closed immersion induced by $0\in C(k)$.  We have a
    short exact sequence of \'etale sheaves on $Z\times C$
    $$ 0 \to \GG_m \to \nu_*\GG_m \to j_*\GG_m\to 0. $$ 
    Since $\nu$ is a finite morphism, this induces an exact sequence
    $$ \  \Pic(Z\times C') \xrightarrow{f} \Pic Z \to \Br(Z\times C) \to \Br(Z\times C'). $$ 
    Note that, to prove the lemma, it suffices to show that $f$ is surjective. To do so, let $L$ be (the isomorphism
    class of) a line bundle on $Y$. Let $L_1$ be the induced line bundle on $Y_1$ and $L_2$ be the induced line bundle
    on $Y_2$. The homomorphism $f$ maps $L$  to the line bundle $p^\ast L_1 \otimes q^\ast L_2^{-1}$ on $Z$, where
    $p:Z\cong Z\times \{0_1\}\to Y_1$ is the section induced by $0_1$ and $q:Z\cong Z\times \{0_2\}\to Y_2$ is the
    section induced by $0_2$. It is clear that this map is surjective.
\end{proof}

\begin{corollary}\label{cor:norm_map}
    Let $C$ be a compact type curve  over $k$ and let $Z$ be a smooth   integral finite type
    scheme over $k$ with function field $L= K(Z)$. Then, the natural morphism $\Br(Z\times C)
    \to \Br(\Spec L\times_k C)$ is injective.
\end{corollary}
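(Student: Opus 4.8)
The plan is to induct on the number of nodes (singular points) of $C$, using Lemma \ref{lem:norm} to strip off one node at a time and thereby reduce to the case where $C$ is smooth. The geometric input for the base case is Grothendieck's theorem that the Brauer group of a regular integral scheme injects into the Brauer group of its function field, while the inductive step is driven purely by functoriality of $\Br$ in a base-changed commutative square.

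For the base case, suppose $C$ is smooth, so that it is a smooth integral curve over $k$ (a connected reduced compact type curve with no nodes has a single irreducible component, hence is integral). Then $Z\times C = Z\times_k C$ is a product of smooth $k$-schemes, so it is smooth over $k$ and in particular regular. By \cite{GrothendieckBrauerII}, the Brauer group of a regular integral scheme injects into the Brauer group of its function field; applying this to each connected component, $\Br(Z\times C)$ injects into the product of the Brauer groups of the function fields of the components of $Z\times C$. Moreover, the projection $Z\times_k C\to Z$ is flat with one-dimensional fibres and $Z$ is integral, so every irreducible component of $Z\times C$ has dimension $\dim Z + 1$ and therefore dominates $Z$; consequently the generic points of $Z\times C$ all lie in the generic fibre $\Spec L\times_k C$. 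Hence the injection above factors through the restriction $\Br(Z\times C)\to\Br(\Spec L\times_k C)$, which is therefore injective.

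For the inductive step, suppose $C$ has a singular point $0$ and let $C'\to C$ be the partial normalization with respect to $0$, so that $C' = C_1\sqcup C_2$ is a disjoint union of two compact type curves, each with strictly fewer nodes than $C$. Base-changing the morphism $C'\to C$ along $Z\to\Spec k$ and along $\Spec L\to\Spec k$ produces a commutative square of schemes, and functoriality of $\Br$ yields a commutative square whose horizontal arrows are the restrictions to the generic point of $Z$ (for $C$ and for $C'$) and whose vertical arrows are the pullbacks along the partial normalizations $Z\times C'\to Z\times C$ and $\Spec L\times C'\to\Spec L\times C$. By Lemma \ref{lem:norm} the left vertical arrow $\Br(Z\times C)\to\Br(Z\times C')$ is injective, and by the induction hypothesis applied to $C_1$ and $C_2$ the bottom horizontal arrow $\Br(Z\times C')=\Br(Z\times C_1)\times\Br(Z\times C_2)\to\Br(\Spec L\times C')$ is injective. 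Thus the composite $\Br(Z\times C)\to\Br(\Spec L\times C')$ along the left-then-bottom path is injective; since this equals the composite along the top-then-right path, the top arrow $\Br(Z\times C)\to\Br(\Spec L\times C)$ is injective, completing the induction.

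The main obstacle is really the base case: one must invoke the injectivity of $\Br(W)\hookrightarrow\Br(K(W))$ for $W$ regular and check that restriction to the generic point of $Z$ captures all of it, namely that $Z\times C$ may fail to be integral yet each of its generic points lands in the generic fibre $\Spec L\times_k C$. Once this is established, the inductive step is formal, resting entirely on Lemma \ref{lem:norm} and the commutativity of the Brauer square obtained by base change.
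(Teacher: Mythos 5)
Your proof is correct and follows essentially the same strategy as the paper's: an induction reduced to the irreducible case via partial normalization at a node, Lemma \ref{lem:norm} for the vertical injectivity, and Grothendieck's injectivity of $\Br$ of a regular scheme into the Brauer group of its function field for the base case (inducting on the number of nodes is equivalent to the paper's induction on the number of components, since the dual graph is a tree). Your base case is in fact slightly more careful than the paper's, which asserts that $Z\times_k C$ is integral, whereas you correctly allow it to decompose into components and check that each generic point lies in the generic fibre.
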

\begin{proof} 
    Let $N$ be the number of irreducible components of $C$. We argue by induction on $N$. If $N=1$, then $Z\times_k C  $ is   integral over $k$ and the natural pull-back morphism $\Br(Z\times_k C)\to
    \mathrm{Br}(K(Z\times_k C))$ is injective \cite[Cor IV.2.6]{milneetcoh}. In particular, as the natural morphism $\Spec K(Z\times_k
    C)\to Z\times_k C$ factors as $\Spec K(Z\times_k C)\to \Spec L\times_k C\to Z\times_k C$, it follows that $\Br(Z\times_k
    C)\to \Br(\Spec L\times_k C)$ is injective. 
	
    Thus, to prove the corollary, we may and do assume that $N\geq 2$. The natural morphism $\Spec L\to Z$ induces  the
    natural morphism $i:\Spec L \times C \to Z\times C$. Let $C'\to C$ be the partial normalization with respect to the
    choice of a singular point $0$ in $C(k)$. Note that there is a natural morphism $j:\Spec L\times_k C'\to Z\times_k
    C'$, and that $C' = C_1\sqcup C_2$, where $C_1$ and $C_2$ are compact type curves. Note that  the number of irreducible components of $C_1$ (resp.  $C_2$) is
    less than $N$.
	
    There are natural morphisms $f:Z\times_k C'\to Z\times_k C$ and $g:\Spec (L)\times_k C'\to \Spec (L)\times_k C$
    induced by the morphism $C'\to C$.  
    Note that $f^\ast$ is injective by Lemma \ref{lem:norm}. Since the number of irreducible components of $C_1$ (resp. $C_2)$ is less than $N$, by the
    induction hypothesis, the natural pull-back morphisms $j_1^\ast:\Br(Z\times C_1) \to \Br(\Spec L \times C_1)$ and
    $j_2^\ast:\Br(Z\times C_2) \to \Br(\Spec L \times C_2)$ are injective. Note that $$\Br(Z\times C') = \Br(Z\times
    C_1)\oplus \Br(Z\times C_2)$$ and likewise $$\Br(\Spec L\times C') = \Br(\Spec L \times C_1)\oplus \Br(\Spec L\times
    C_2).$$ Thus, we see that  $j^\ast = j_1^\ast \oplus j_2^\ast$ is injective. It follows that $ j^\ast \circ
    f^\ast$ is injective. As $g^\ast\circ i^\ast = j^\ast \circ f^\ast$,  we conclude that $i^\ast$ is injective.
\end{proof}
 
A compact type curve $C$ over $k$ is of genus zero if  all   irreducible components of $C_{\bar{k}}$ have genus zero.  Note that, in this case, all irreducible components of $C_{\bar{k}}$ are isomorphic to $\mathbb P^1_{\bar{k}}$ over $\bar{k}$.

\begin{lemma}\label{lem:manin}
    Let $C$ be a compact type curve of genus zero over $k$. Assume that all irreducible components of $C$ are geometrically irreducible, and that $C(k)\neq \emptyset$. Then the natural pull-back
    morphism $\mathrm{Br}(k) \to \mathrm{Br}(C)$ is an isomorphism.
\end{lemma}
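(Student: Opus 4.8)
The plan is to handle injectivity and surjectivity separately, the former being immediate and the latter being proved by induction on the number $N$ of irreducible components of $C$, peeling off one component at a time via the partial normalizations of Lemma~\ref{lem:norm}. For injectivity, observe that the rational point $\Spec k \to C$ is a section of the structure morphism $C \to \Spec k$, so the pullback $\Br(k)\to\Br(C)$ admits a retraction and is in particular (split) injective.

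Before inducting I would record the geometric fact that makes the machinery applicable: under the present hypotheses \emph{every node of $C$ is $k$-rational}. Since each component of $C$ is geometrically irreducible, no two geometric components are Galois-conjugate, so each irreducible component $C_i$ is already defined over $k$; moreover $C_{\kbar}$ is again a compact type curve of genus zero (its arithmetic genus $0$ is a base-change invariant), i.e.\ a tree of copies of $\PP^1_{\kbar}$ whose vertices are the $(C_i)_{\kbar}$. As two components of such a tree meet in at most one point, for adjacent $C_i, C_j$ the scheme-theoretic intersection $C_i\cap C_j$ is a $k$-subscheme of $C$ whose base change is a single reduced point; hence $C_i\cap C_j=\Spec k$ and the node is $k$-rational. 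This permits partial normalization at a node at every stage of the induction.

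Now for the induction itself. When $N=1$ the curve $C$ is smooth, geometrically irreducible, of genus zero and has a rational point, so $C\cong\PP^1_k$ and $\Br(\PP^1_k)=\Br(k)$ is classical (e.g.\ $\Br(\PP^1_{\kbar})=0$ by Tsen's theorem and $H^1(k,\Pic\PP^1_{\kbar})=H^1(k,\ZZ)=0$). When $N\geq 2$ I would fix a $k$-rational node $s$ and form the partial normalization $\nu\colon C'=C_1\sqcup C_2\to C$ at $s$, where $C_1,C_2$ are compact type genus zero curves with geometrically irreducible components, each with strictly fewer components than $C$, and where the two preimages $s_1\in C_1(k)$ and $s_2\in C_2(k)$ of $s$ supply rational points. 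By the inductive hypothesis $\Br(k)\xrightarrow{\sim}\Br(C_i)$, the inverse being restriction to $s_i$, while Lemma~\ref{lem:norm} applied with $Z=\Spec k$ shows that $\nu^\ast\colon\Br(C)\to\Br(C')=\Br(C_1)\oplus\Br(C_2)\cong\Br(k)\oplus\Br(k)$ is injective and that its composite with the pullback $\Br(k)\to\Br(C)$ is the diagonal.

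The crux, and the step I expect to be the main obstacle, is to pin down the image of $\nu^\ast$ as exactly the diagonal. For $\beta\in\Br(C)$ with $\nu^\ast\beta=(\beta_1,\beta_2)$, I would use that $s_1$ and $s_2$ both map to the single node $s$, so that $\beta_1|_{s_1}=\beta|_s=\beta_2|_{s_2}$ in $\Br(\kappa(s))=\Br(k)$. Under the identifications $\Br(C_i)\cong\Br(k)$ via restriction to $s_i$, this says precisely that $(\beta_1,\beta_2)$ is diagonal, so the image of the injective map $\nu^\ast$ is contained in the diagonal. Since that diagonal is also the image of the composite $\Br(k)\to\Br(C)\xrightarrow{\nu^\ast}\Br(k)\oplus\Br(k)$ and $\nu^\ast$ is injective, the pullback $\Br(k)\to\Br(C)$ must be surjective; together with the injectivity noted above this shows it is an isomorphism and completes the induction.
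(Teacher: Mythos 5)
Your proposal is correct and follows essentially the same route as the paper: induction on the number of irreducible components, partial normalization at a node, injectivity of $\nu^\ast$ from Lemma~\ref{lem:norm}, and the classical computation $\Br(\PP^1_k)=\Br(k)$ for the base case. The only differences are cosmetic --- you deduce surjectivity by pinning down the image of $\nu^\ast$ as the diagonal rather than proving injectivity of $s^\ast$ via the paper's commutative square, and you usefully make explicit the tacitly assumed fact that every node of $C$ is $k$-rational.
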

\begin{proof} 
    Let $N$ be the number of irreducible components of $C$. We argue by induction on $N$. Firstly, if $N=1$, then the
    result is well-known. Indeed, if $N=1$, then $C\cong \mathbb P^1_k$. Let $k\to k^s$ be a separable closure of $k$. It is not hard to show that $\Br(k) =  \ker[\Br(\PP^1_k)\to \Br(\PP^1_{k^s})]$; see   \cite[Thm.~42.8]{manin}.  By a theorem of Grothendieck \cite[Corollaire~5.8]{GrothendieckBrauerIII} (see also \cite{Saltman}), we have that $\Br(\PP^1_{k^s}) =0$. We conclude that the result holds for $N=1$. Thus, to prove the lemma, we may and do assume that
    $N\geq 2$.
 	
    Let $s:\Spec k\to C$ be a singular point. Let $\nu:C'\to C$ be the partial normalization of $C$ with respect to
    $s$. Note that $C' = C_1 \sqcup C_2$, where $C_1$ and $C_2$ are compact type curves of genus zero over $k$.  Note that the irreducible components of $C_1$ and $C_2$ are geometrically irreducible, and that $C_1(k) \neq\emptyset$ and $C_2(k)\neq \emptyset$. Let
    $s_1:\Spec k\to C_1$ and $s_2:\Spec k\to C_2$ be the sections corresponding to $s:\Spec k \to C$.  Note that the
    following diagram of groups 
    \[ 
    \xymatrix{
    \Br(C) \ar[d]_{s^*}\ar[rr]^{\nu^*} & & \Br(C_1) \oplus \Br(C_2) \ar[d]^{s_1^*\oplus s_2^*} \\
    \Br(k) \ar[rr]_{\Delta} & &  \Br(k) \oplus \Br(k)	
    }
    \] 
    is commutative.  It follows from Lemma \ref{lem:norm} that the morphism $\nu^\ast$ is injective. By the induction
    hypothesis, since the number of irreducible components of $C_1$ and $C_2$ is less than $N$, the natural pull-back
    morphisms $\Br(k) \to \Br(C_1)$ and $\Br(k)\to \Br(C_2)$ are isomorphisms. In particular, $s_1^*\oplus s_2^*:
    \Br(C_1)\oplus \Br(C_2) \to \Br(k)\oplus \Br(k)$ is an isomorphism. We conclude that $(s_1^*\oplus s_2^*)\circ
    \nu^\ast$ is injective. However, since $\Delta^\ast \circ s^* = (s_1^*\oplus s_2^*) \circ \nu^*$, we conclude that
    $s^*$ is injective. Since $s^*:\Br(C) \to \Br(k)$ is surjective, we conclude that $s^*:\Br(C) \to \Br(k)$ is an
    isomorphism. The result follows.
\end{proof}

\begin{remark}
    Let $E$ be an elliptic curve over $\mathbb Q$ with $\mathrm{H}^1_{\et}(\Spec \mathbb{Q},E)\neq 0$. As is shown
    in \cite[\S 2]{Lichtenbaum}, the Leray spectral sequence induces  a short exact sequence 
    \[ \Br(\mathbb{Q})\to \Br(E) \to \mathrm{H}^1_{\et}(\Spec \mathbb{Q},E)\to 0.\] 
    Since $E(\QQ) \neq \emptyset$, the natural pull-back morphism $\Br(\mathbb{Q})\to \Br(E)$ is injective. As
    $\mathrm{H}^1_{\et}(\Spec \mathbb{Q}, E) $ is non-trivial, the natural morphism $\Br(\mathbb{Q})\to \Br(E)$ is not
    an isomorphism. 
\end{remark}

\begin{corollary}\label{cor:product}
    Let  $Z$ be a smooth   integral scheme of finite type over $k$. Let $C$ be a compact type
    curve of genus zero over $k$ with $C(k)\neq \emptyset$ and whose irreducible components are geometrically irreducible. Then  the natural pull-back morphism $\Br(Z) \to \Br(Z \times
    C)$ is an isomorphism.
\end{corollary}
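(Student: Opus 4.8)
The plan is to prove injectivity and surjectivity separately: injectivity comes for free from the rational point, while surjectivity is a diagram chase carried out over the function field of $Z$. Throughout, write $\pi\colon Z\times C\to Z$ for the projection and $L=K(Z)$ for the function field.

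First I would handle injectivity. The hypothesis $C(k)\neq\emptyset$ furnishes a rational point $c_0\in C(k)$, hence a section $\sigma\colon Z\cong Z\times\{c_0\}\to Z\times C$ of $\pi$. Since $\sigma^*\circ\pi^*=\mathrm{id}$ on $\Br(Z)$, the pullback $\pi^*$ is split injective, and this step requires nothing beyond the existence of $c_0$.

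For surjectivity I would pass to $C_L:=\Spec L\times_k C$, which is again a compact type curve of genus zero over $L$ with geometrically irreducible components and with $C_L(L)\neq\emptyset$ (the point $c_0$ gives an $L$-point). The key is the commutative square of pullback maps
\[
\xymatrix{
\Br(Z) \ar[r]^-{\pi^*} \ar[d]_{\alpha} & \Br(Z \times C) \ar[d]^{\beta} \\
\Br(L) \ar[r]^-{\gamma} & \Br(C_L),
}
\]
where $\alpha$ is restriction to the generic point of $Z$, $\beta$ is the map of Corollary \ref{cor:norm_map}, and $\gamma$ is pullback along the structure map $C_L\to\Spec L$. The two substantive inputs enter here: $\beta$ is injective by Corollary \ref{cor:norm_map}, and $\gamma$ is an isomorphism by applying Lemma \ref{lem:manin} over the field $L$. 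I would then fix $x\in\Br(Z\times C)$ and show $x=\pi^*\sigma^*x$. The section $\sigma_L\colon\Spec L\to C_L$ attached to $c_0\in C(L)$ satisfies $\sigma_L^*\circ\gamma=\mathrm{id}$, so $\sigma_L^*=\gamma^{-1}$; and since $c_0$ is defined over $k$, the sections $\sigma$ and $\sigma_L$ are compatible with the generic-point maps, giving $\alpha\circ\sigma^*=\sigma_L^*\circ\beta$. Combining these with the commutativity of the square yields
\[
\beta(\pi^*\sigma^*x)=\gamma\,\alpha\,\sigma^*x=\gamma\,\sigma_L^*\,\beta(x)=\gamma\gamma^{-1}\beta(x)=\beta(x),
\]
whence $x=\pi^*\sigma^*x$ by injectivity of $\beta$. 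This establishes surjectivity.

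I expect the main obstacle to be purely bookkeeping: verifying the compatibility $\alpha\circ\sigma^*=\sigma_L^*\circ\beta$, which rests on the chosen point $c_0$ living over $k$ so that $\sigma$ and $\sigma_L$ agree on the generic fibre. The genuinely nontrivial content is already packaged in Corollary \ref{cor:norm_map} (injectivity into the generic fibre) and Lemma \ref{lem:manin} (triviality of the relative Brauer group over a field); once both are invoked over $L$, the corollary follows by the formal chase above.
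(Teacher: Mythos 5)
Your proposal is correct and follows essentially the same route as the paper: split injectivity of $\pi^*$ from the section given by $c_0$, and then the commutative square relating $\Br(Z\times C)\to\Br(\Spec L\times_k C)$ (injective by Corollary \ref{cor:norm_map}) with Lemma \ref{lem:manin} applied over $L$, using the compatibility $s\circ i=j\circ t$. The only cosmetic difference is that the paper phrases the conclusion as injectivity of the section map $s^*$ (already known surjective), whereas you phrase it as the identity $x=\pi^*\sigma^*x$; these are equivalent.
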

\begin{proof}
    Let $0\in C(k)$ be a rational point and $s:Z\to Z\times C$ the corresponding section to the projection morphism
    $p:Z\times C\to Z$. Consider the induced maps on Brauer groups $p^*:\Br(Z)\to\Br(Z\times C)$ and
    $s^*:\Br(Z\times C)\to \Br(Z)$. Since the composition $s^*\circ p^*$ is   the identity, it follows that $s^*$
    is surjective.  
 	
    Let $L=k(Z)$ be the function field of $Z$. Let $i:\Spec L \to Z$ be the natural morphism. Let $j:\Spec L \times C\to Z
    \times C$ be the corresponding natural morphism. Note that $s\circ i = j \circ t$, where $t:\Spec L \to \Spec L
    \times C$ is induced by the point $0\in   C(k)$. In particular, since $s\circ i = j\circ t$, we have $i^\ast \circ s^\ast = t^\ast \circ j^\ast$. 
    By Corollary \ref{cor:norm_map}, the morphism $j^*$ is injective.  Moreover, by  Lemma \ref{lem:manin}, the morphism $t^*$ is an isomorphism (hence injective). In particular,
    the composition $t^\ast\circ j^\ast$ is injective. However, since $i^\ast \circ s^\ast  =
    t^\ast \circ j^\ast$,  it follows that $s^*$ is injective. We conclude that $s^*$ is an isomorphism, and that $p^*$
    is its inverse.
\end{proof}
 
\begin{remark}
    Corollary \ref{cor:product} fails without the assumption that $C(k)\neq \emptyset$. Indeed, if $C$ is a smooth proper
    curve of genus zero over a field $k$, then $\Br(k)\to \Br(C)$ is injective if and only if $C$ has a rational point.
\end{remark}

\section{The cycle class map}\label{section:cycle_class_map}
   Let $X$   be a smooth projective connected scheme over an algebraically closed field $k$. For any prime $\ell$   we have the
   Kummer sequence of fppf sheaves $$ 0\to \mu_{\ell^n} \to \Gm \to \Gm \to 0.$$ Using Grothendieck's theorem that for
   smooth sheaves the fppf and \'etale cohomology agree, we see that the long exact sequence in cohomology induces $$
   0\to \Pic(X)/\ell^n \to \mathrm{H}^2_\fppf(X, \mu_{\ell^n}) \to \Br(X)[\ell^n] \to 0$$ where the last term is the
   $\ell^n$-torsion in the cohomological Brauer group $\Br (X) = \mathrm{H}^2_\et(X, \Gm)$. Since $\Pic^0(X)$ is the group of closed points on an abelian
   variety over an algebraically closed field, it is a divisible group. In particular, the equality $\Pic(X)/\Pic^0(X) = \NS(X)$ induces $\Pic(X)/\ell^n
   = \NS(X)/\ell^n$. Passing to the projective limit we obtain $$ 0 \to \Pic(X)\otimes\ZZ_\ell \to \mathrm{H}^2_\fppf(X,
   \ZZ_\ell(1)) \to T_\ell\Br(X) \to 0; $$  see \cite[(5.8.5)]{Illusie}.
   
   \begin{lemma}\label{lem:tate_criterion}
 If there is an integer
   	$d\geq 1$ such that $\mathrm{Br}(X)$ is killed by $d$, then for all prime numbers $\ell$, the homomorphism
   	$ \mathrm{NS}(X)\otimes\ZZ_\ell \to \mathrm{H}^2_{\mathrm{fppf}}(X, \ZZ_\ell(1)) $ is an isomorphism of $\ZZ_\ell$-modules.
   \end{lemma}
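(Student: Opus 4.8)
The plan is to read the statement off directly from the short exact sequence
$$ 0 \to \NS(X)\otimes\ZZ_\ell \to \mathrm{H}^2_\fppf(X,\ZZ_\ell(1)) \to T_\ell\Br(X)\to 0 $$
obtained just above by passing to the projective limit over the Kummer sequences, where I use the identification $\Pic(X)/\ell^n = \NS(X)/\ell^n$ so that $\varprojlim_n \Pic(X)/\ell^n = \NS(X)\otimes\ZZ_\ell$. The homomorphism appearing in the lemma is precisely the left-hand arrow of this sequence; it is therefore automatically injective, with cokernel $T_\ell\Br(X)$. Consequently it is an isomorphism if and only if $T_\ell\Br(X)=0$, and the whole problem reduces to establishing this vanishing under the hypothesis that $\Br(X)$ is killed by some integer $d\geq 1$.

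To prove $T_\ell\Br(X)=0$, I would first bound the $\ell$-primary torsion of $\Br(X)$. Writing $d=\ell^a m$ with $\ell\nmid m$, any class killed by a power $\ell^N$ of $\ell$ is also killed by $d$, hence by $\gcd(\ell^N,d)$, which divides $\ell^a$; thus the entire $\ell$-power torsion subgroup of $\Br(X)$ is annihilated by $\ell^a$, and in particular $\Br(X)[\ell^n]=\Br(X)[\ell^a]$ for all $n\geq a$. Now recall that $T_\ell\Br(X)=\varprojlim_n \Br(X)[\ell^n]$ with transition maps given by multiplication by $\ell$. Given a compatible system $(x_n)$ satisfying $\ell x_{n+1}=x_n$, iterating gives $x_n=\ell^a x_{n+a}$; since $x_{n+a}$ lies in the $\ell$-power torsion, which is killed by $\ell^a$, this forces $x_n=0$ for every $n$. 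Hence $T_\ell\Br(X)=0$, and the map is an isomorphism for each prime $\ell$.

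I do not expect a genuine obstacle here: the substantive inputs have already been supplied, namely the limit Kummer sequence displayed above and the separately proved fact (Proposition \ref{prop:Brauer_is_killed}) that $\Br(X)$ is killed by some $d\geq 1$. The only two points that demand a little care are the identification of the left-hand term of the sequence with $\NS(X)\otimes\ZZ_\ell$ through $\Pic(X)/\ell^n=\NS(X)/\ell^n$, and the elementary $\ell$-adic bookkeeping bounding the $\ell$-torsion of $\Br(X)$; both are routine, so the proof is essentially a formal consequence of the preceding material.
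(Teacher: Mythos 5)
Your proposal is correct and is essentially the paper's own argument: the paper likewise reads the lemma off the limit Kummer sequence $0\to \NS(X)\otimes\ZZ_\ell\to \mathrm{H}^2_{\fppf}(X,\ZZ_\ell(1))\to T_\ell\Br(X)\to 0$ and observes that the hypothesis forces $T_\ell\Br(X)=0$. The only difference is that you spell out the elementary verification that a torsion group of bounded exponent has trivial Tate module, which the paper leaves implicit.
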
 
   \begin{proof}  
   	Our assumption on the Brauer group of $X$ implies that, for all prime numbers $\ell$, the $\ell$-adic Tate module $T_\ell
   	\mathrm{Br}(X)$ is zero. The statement therefore follows immediately from the discussion above.
   \end{proof}

  Note that, for a prime $\ell\in k^\ast$, we have that  $\mathrm{H}^2_{\fppf}(X,\mathbb Z_\ell(1)) =
   \mathrm{H}^2_{\et}(X,\mathbb{Z}_\ell(1))$. We will often use this.
   
   \begin{remark}\label{remark:int_Tate}
   	Let $X$ be a smooth projective geometrically connected scheme over a finitely generated field $K$, let $\bar{K}$ be an algebraic closure of $K$, and let $\ell$ be a prime number with $\ell\in K^\times$. Assume that $\Br(X)$ is killed by some positive integer $d$. Then, by Lemma \ref{lem:tate_criterion}, the natural map
   	\[\NS(X_{\bar{K}}) \otimes \ZZ_\ell \to \mathrm{H}^2_{\et}(X_{\bar{K}},\ZZ_\ell)\] is an isomorphism of $\ZZ_\ell$-modules compatible with the action of the absolute Galois group $G=\mathrm{Gal}(\bar{K}/K)$. If we   take    $G$-invariants on both sides of this isomorphism we obtain an integral version of the Tate conjecture  for divisors on  $X$. (Note that $\NS(X)$ is not necessarily isomorphic to the subgroup of $G$-invariants in $\NS(X_{\bar{K}})$. In particular, the ``naive integral'' analogue of   Tate's conjecture  for divisors fails for such $X$ over $K$; see for instance \cite{Schoen}.) 
   \end{remark}

\begin{corollary}\label{cor:crys} Assume that the characteristic $p$ of $k$ is positive.
	Let $W$ be the Witt ring of $k$ and let $K$ be the fraction field of $W$. If there is an integer
	$d\geq 1$ such that $\mathrm{Br}(X)$ is killed by $d$, then  the $K$-linear morphism
	\[ \mathrm{NS}(X)\otimes K \to \mathrm{H}^2_{\mathrm{crys}}(X/W) \otimes K \] is an isomorphism of $K$-vector spaces.
\end{corollary}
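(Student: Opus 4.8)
The plan is to reduce the statement to a dimension count, by separately establishing injectivity of the map and equality of the dimensions of its source and target; the hypothesis on $\mathrm{Br}(X)$ enters only through the already-proved $\ell$-adic Lemma \ref{lem:tate_criterion}. Write $\rho = \rho(X)$ for the Picard rank and $b_2$ for the second $\ell$-adic Betti number of $X$ (for some, equivalently any, prime $\ell \neq p$). First I would apply Lemma \ref{lem:tate_criterion} at such an $\ell$: since $\mathrm{H}^2_{\mathrm{fppf}}(X,\ZZ_\ell(1)) = \mathrm{H}^2_{\et}(X,\ZZ_\ell(1))$ for $\ell \neq p$, the resulting isomorphism $\NS(X)\otimes\ZZ_\ell \xrightarrow{\sim} \mathrm{H}^2_{\et}(X,\ZZ_\ell(1))$ gives, upon comparing ranks, that $\rho = b_2$. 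In particular $\dim_K(\NS(X)\otimes K) = \rho = b_2$.

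Next I would use that crystalline cohomology is a Weil cohomology whose Betti numbers coincide with the $\ell$-adic ones, so that $\dim_K(\mathrm{H}^2_{\mathrm{crys}}(X/W)\otimes K) = b_2$ as well (this is classical: after spreading $X$ out to a smooth proper scheme over a finitely generated $\FF_p$-algebra and specializing to a finite field, it follows from the comparison of $\ell$-adic and crystalline characteristic polynomials of Frobenius, Betti numbers being constant under the base changes involved). Hence the source and target of the map in question are finite-dimensional $K$-vector spaces of the same dimension $\rho = b_2$, and it suffices to prove that the crystalline cycle class map $\NS(X)\otimes K \to \mathrm{H}^2_{\mathrm{crys}}(X/W)\otimes K$ is injective.

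For injectivity I would argue through intersection theory, exactly as in the $\ell$-adic setting. The crystalline cycle class map is multiplicative and compatible with the trace isomorphism $\mathrm{H}^{2n}_{\mathrm{crys}}(X/W)\otimes K \cong K$, where $n = \dim X$, so that for divisor classes $D, A_1,\dots,A_{n-1}$ the cup product $c_1^{\mathrm{crys}}(D)\smile c_1^{\mathrm{crys}}(A_1)\smile\cdots\smile c_1^{\mathrm{crys}}(A_{n-1})$ computes the intersection number $(D\cdot A_1\cdots A_{n-1})$. Consequently any class in the kernel of the cycle class map is numerically trivial; but the subgroup of numerically trivial classes in $\NS(X)$ is $\Pic^\tau(X)/\Pic^0(X)$, which is finite, and therefore vanishes in $\NS(X)\otimes K$. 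This yields injectivity, and combined with the dimension count we conclude that the map is an isomorphism.

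The main obstacle is not the formal linear-algebra argument but assembling the correct inputs from crystalline theory: the compatibility of the crystalline cycle class map with intersection numbers (so that its kernel is numerically trivial) and the equality of crystalline and $\ell$-adic Betti numbers. Both are standard but nontrivial consequences of the theory of the de Rham--Witt complex and of crystalline Chern classes. An alternative route, which sidesteps the Betti-number comparison, would be to factor the map through $\mathrm{H}^2_{\mathrm{fppf}}(X,\ZZ_p(1))$ using Lemma \ref{lem:tate_criterion} at $\ell = p$ together with a flat-to-crystalline comparison identifying $\mathrm{H}^2_{\mathrm{fppf}}(X,\ZZ_p(1))\otimes K$ with the slope-one part $(\mathrm{H}^2_{\mathrm{crys}}(X/W)\otimes K)^{F=p}$; one would then still invoke the dimension count to see that this slope-one subspace exhausts $\mathrm{H}^2_{\mathrm{crys}}(X/W)\otimes K$.
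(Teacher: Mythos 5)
Your proof is correct and follows essentially the same route as the paper: injectivity of $\mathrm{NS}(X)\otimes K \to \mathrm{H}^2_{\mathrm{crys}}(X/W)\otimes K$ combined with the dimension count $\rho = b_2 = \dim_K \mathrm{H}^2_{\mathrm{crys}}(X/W)\otimes K$ coming from Lemma \ref{lem:tate_criterion} and Katz--Messing. The only difference is that you unpack the injectivity via compatibility of the crystalline cycle class with intersection numbers (kernel numerically trivial, hence torsion in $\mathrm{NS}(X)$), where the paper simply cites Illusie; this is the standard argument behind that reference.
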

\begin{proof}  
	Note that the homomorphism $ \mathrm{NS}(X)\otimes K \to \mathrm{H}^2_\crys(X/W) \otimes K $ is injective \cite[Remarque~6.8.5]{Illusie}. Let $\ell$ be a prime number with $\ell \in k^\ast$. By Lemma \ref{lem:tate_criterion}, we have $$\dim_K \mathrm{NS}(X) \otimes_\ZZ K= \dim_K \mathrm{H}^2_{\fppf}(X,\mathbb Q_\ell) \otimes_{\QQ_\ell} K = \dim_K \mathrm{H}^2_{\et}(X,\mathbb Q_\ell) \otimes_{\QQ_\ell} K.$$ Moreover,  by \cite{KatzMessing} (see also \cite[1.3.1]{Illusie2}), we have that $$\dim_K \mathrm{H}^2_{\et}(X,\mathbb Q_\ell) \otimes_{\QQ_\ell} K = \dim_K \mathrm{H}^2_\crys(X/W) \otimes K.$$ This concludes the proof (as any injective $K$-linear map of finite-dimensional $K$-vector spaces of equal dimension is an isomorphism).
\end{proof}


 \section{Brauer groups of rationally chain connected varieties}
 In this section we prove Theorems \ref{thm:picard_constant} and \ref{thm:tate}.
 We let $k$ denote an algebraically closed field.
 
 \begin{lemma}\label{brauer-restriction-corestriction-singular} 
 	Let $X$ and $Y$ be quasi-projective   connected   reduced schemes over  $k$. Let $f:X\to Y$ be
 	a generically finite dominant morphism. If $Y$ is smooth, then there is an integer $d\geq 1$ such that
 	the kernel of the induced map $f^*:\Br (Y)\to \Br (X)$ is killed by $d$.
 \end{lemma}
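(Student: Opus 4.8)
The plan is to exploit a standard restriction--corestriction (transfer) argument for Brauer groups, reducing the statement to a generic situation over the function field of $Y$.

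First I would observe that since $f\colon X\to Y$ is generically finite and dominant, there is a dense open subscheme $U\subseteq Y$ over which $f$ restricts to a finite flat morphism $f^{-1}(U)\to U$ of some constant degree $d\geq 1$; after shrinking we may also assume $f^{-1}(U)\to U$ is a finite locally free morphism between smooth (or at least regular integral) schemes. On such a finite locally free morphism one has a corestriction (norm/transfer) map $f_*\colon \Br(f^{-1}(U))\to \Br(U)$ satisfying $f_*\circ f^* = [d]$, multiplication by the degree $d$. Consequently the kernel of $f^*\colon \Br(U)\to \Br(f^{-1}(U))$ is killed by $d$.

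Next I would promote this from the open locus $U$ to all of $Y$. Because $Y$ is smooth (hence regular), the restriction map $\Br(Y)\to \Br(U)$ is injective; indeed for $Y$ regular integral, $\Br(Y)$ injects into $\Br(K(Y))$ by \cite[Cor.~IV.2.6]{milneetcoh}, and this factors through $\Br(U)$. A class $\alpha\in\Br(Y)$ lying in the kernel of $f^*\colon \Br(Y)\to \Br(X)$ restricts to a class $\alpha|_U\in\Br(U)$ that dies in $\Br(f^{-1}(U))$ (since it already dies in $\Br(X)$, and $f^{-1}(U)\subseteq X$); hence $d\cdot\alpha|_U=0$ in $\Br(U)$, and by injectivity of $\Br(Y)\to\Br(U)$ we get $d\cdot\alpha=0$ in $\Br(Y)$. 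Thus $\ker f^*$ is killed by $d$, as desired.

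The main obstacle is the construction of a well-behaved corestriction map with the identity $f_*\circ f^*=[d]$, since $X$ is only assumed reduced and connected (not smooth or even irreducible), so one cannot directly invoke the cleanest formulations. The plan is to sidestep this by working only where $f$ is finite locally free: over the smooth locus $U\subseteq Y$ the transfer for finite locally free morphisms is available (for instance via the norm map on $\Gm$, or by the general trace/transfer formalism for étale cohomology, cf.\ \cite[\S V.1]{milneetcoh}), and the relation $f_*\circ f^*=[d]$ holds there. One subtlety to handle carefully is that $f^{-1}(U)$ may be disconnected or reducible; but the transfer argument applies componentwise, and it suffices that the total degree over $U$ equals $d$. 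Finally, the injectivity $\Br(Y)\hookrightarrow\Br(U)$ uses only the regularity of $Y$, which is exactly the smoothness hypothesis, so no further assumption on $X$ is needed beyond what is given.
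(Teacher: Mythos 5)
Your proof is correct and is essentially the paper's argument: the paper shrinks $X$ to a smooth integral open subscheme $X'$ that still dominates $Y$ and then cites \cite[Prop.~1.1]{ISZ}, whose content is exactly the restriction--corestriction mechanism you spell out, namely the transfer identity $f_*\circ f^*=[d]$ over a dense open $U\subseteq Y$ where $f$ is finite locally free, combined with the injectivity of $\Br(Y)\to\Br(U)$ coming from the regularity of $Y$. The only cosmetic difference is that you shrink $Y$ and treat a possibly reducible $f^{-1}(U)$ componentwise, whereas the paper first shrinks $X$ to a smooth integral piece so that the cited result applies verbatim.
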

 
 \begin{proof}
 	Let $X'\subset X$ be an open subscheme such that $X'$ is a    smooth  quasi-projective geometrically integral scheme over $k$ and $X'\to Y$ is dominant.  In particular, $X'\to Y$ is a generically finite dominant morphism of smooth geometrically integral quasi-projective schemes over $k$. Therefore, by \cite[Prop.~1.1]{ISZ} (which holds over any field), the kernel of the natural pull-back morphism $\Br(Y)\to
 	\Br(X')$ is killed by  the generic degree of $X'\to Y$. Since
 	the kernel of the morphism $\Br(Y)\to \Br(X)$ is contained in the kernel of $\Br(Y)\to \Br(X')$, the result
 	follows.
 \end{proof}
 

 \begin{proposition}\label{prop:Brauer_is_killed}
 	Let $X$ be a smooth projective  geometrically connected scheme over  $k$. If $X$ is rationally chain
 	connected, then there is an integer $d\geq 1$ such that $\mathrm{Br}(X)$ of $X$ is killed by $d$.
 \end{proposition}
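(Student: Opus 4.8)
The plan is to exploit the injectivity of $\Br(X)\to\Br(k(X))$ together with the restriction--corestriction bound of Lemma \ref{brauer-restriction-corestriction-singular}. Concretely, I would produce a generically finite dominant morphism $f\colon Z\to X$ from a smooth integral $k$-scheme $Z$ and then show that \emph{every} class $\alpha\in\Br(X)$ lies in $\ker(f^*\colon\Br(X)\to\Br(Z))$. Granting this, Lemma \ref{brauer-restriction-corestriction-singular} furnishes a single integer $d\geq 1$ (the generic degree of $f$) that kills this kernel, whence $d\cdot\Br(X)=0$ and the Proposition follows. The whole point of introducing $Z$ is that a chain of rational curves joining two points of $X$ will only be defined, with all its components and nodes rational, after a finite extension of $k(X)$; the degree of that extension is exactly what the integer $d$ absorbs.

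To build the connecting data, fix a general point $x_0\in X(k)$. Since $X$ is rationally chain connected and $k=\kbar$, the locus of points that can be joined to $x_0$ by a chain of rational curves is dense in $X$. Spreading out the universal chain over a parameter space, cutting this space down to $\dim X$ so that evaluation at the moving endpoint becomes generically finite and dominant onto $X$, and passing to a further finite cover over which the generic chain splits completely, I obtain: a generically finite dominant $f\colon Z\to X$ with $Z$ smooth integral; a compact type curve $C$ of genus zero over $L=k(Z)$ whose irreducible components are geometrically irreducible and with $C(L)\neq\emptyset$; a morphism $e\colon C\to X_L$; and two sections $s_0,s_1\in C(L)$ such that $e\circ s_0$ is the constant $L$-point $x_0$ while $e\circ s_1$ is the tautological point $\xi\colon\Spec L\to Z\to X$. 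I expect this construction to be the technical heart, and the main obstacle: one must arrange, after the finite base change encoded by $Z$, that the generic fibre of the family of chains is genuinely of compact type, of genus zero, and totally split, so that Lemma \ref{lem:manin} applies over $L$.

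With this in place the computation is short. Writing $\pi\colon X_L\to X$ for the projection and $\alpha_L=\pi^*\alpha$, I would consider $e^*\alpha_L\in\Br(C)$. By Lemma \ref{lem:manin} applied over the field $L$, the pullback $\Br(L)\to\Br(C)$ is an isomorphism, so restriction along any $L$-point of $C$ recovers the same class; in particular $s_0^*e^*\alpha_L=s_1^*e^*\alpha_L$ in $\Br(L)$. Now $s_0^*e^*\alpha_L=(\pi\circ e\circ s_0)^*\alpha$ factors through $\Br(\kappa(x_0))=\Br(k)=0$ since $k$ is algebraically closed, hence vanishes; therefore $s_1^*e^*\alpha_L=\xi^*\alpha=0$ in $\Br(L)$. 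Because $Z$ is smooth and integral, the map $\Br(Z)\to\Br(L)$ is injective, so $\xi^*\alpha=0$ forces $f^*\alpha=0$, that is $\alpha\in\ker(f^*)$. As $\alpha$ was arbitrary, $\Br(X)=\ker(f^*)$, and Lemma \ref{brauer-restriction-corestriction-singular} produces the integer $d$ with $d\cdot\Br(X)=0$.
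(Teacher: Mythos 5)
Your argument is correct and its skeleton is the same as the paper's: a family of genus-zero compact-type chains through a fixed point $x_0$, dominating $X$ generically finitely; constancy of a Brauer class along such a chain (so that its value at the moving endpoint equals its value at $x_0$, which is zero since $\Br(k)=0$); and finally Lemma \ref{brauer-restriction-corestriction-singular} to convert ``$f^*$ is the zero map'' into ``$\Br(X)$ is killed by the generic degree $d$''. The one genuine difference is in the middle step: you pass to the function field $L=k(Z)$ and apply Lemma \ref{lem:manin} directly to the generic chain $C$ over $L$, together with the injectivity of $\Br(Z)\to\Br(L)$ for $Z$ regular integral, whereas the paper works with a globally trivial family $Z\times T$ and invokes Corollary \ref{cor:product} (that $\Br(Z)\to\Br(Z\times T)$ is an isomorphism), which is itself proved by reducing to the generic point via Corollary \ref{cor:norm_map} and Lemma \ref{lem:manin}. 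In effect you have inlined that reduction; this buys you a little flexibility, since you only need the chain to become split and of compact type over $\Spec L$ after a finite cover, rather than producing a product family $Z\times T$ over all of $Z$, at the cost of having to construct the generic chain data $(C,e,s_0,s_1)$ by hand --- a spreading-out step you correctly flag as the technical heart, and which the paper's proof asserts in essentially the same unproved form.
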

 \begin{proof}  Fix   a general point $x$ in $X$.
 	Since $X$ is rationally chain connected,  there exist a smooth geometrically integral variety $Z$ with $\dim (Z)= \dim (X) - 1$,  a compact type curve $T$ of genus zero over $k$, and a dominant generically finite morphism $F:Z\times T\to X$ such that, for all $z$ in $Z$, the image of $F_z: T \cong \{z\}\times T \to X$ contains $x$. Note that, replacing $Z$ by a dense open if necessary, there is a section $s:Z\to Z\times T$  such that the composition $$ Z \xrightarrow{s}
 	Z\times T \xrightarrow{F} X $$ contracts $Z$ to $x$. 
 	In particular, as the pullback morphism $(F\circ s)^*: \Br (X) \to \Br (Z)$ factors through $\Br(k) =0$,  we see that   $(F\circ s)^\ast$ 
 	is the zero map. Now, since the natural pull-back morphism  $\Br(Z)\to \Br(Z\times T)$ is an isomorphism (Corollary \ref{cor:product}), we see that $\Br (Z) \to \Br (Z\times T)$ is the zero map.
 	However, by 
 	Lemma \ref{brauer-restriction-corestriction-singular}, there is an integer $d\geq 1$ such that the kernel of
 	$\mathrm{Br}(X) \to \mathrm{Br}(Z\times T)$ is killed by $d$. As the kernel of $\mathrm{Br}(X)\to
 	\mathrm{Br}(Z\times T)$ equals $\mathrm{Br}(X)$, we conclude that $\mathrm{Br}(X)$ is killed by $d$ as required.
 \end{proof}

 \begin{proof}[Proof of Theorem \ref{thm:tate}]
 	By Proposition \ref{prop:Brauer_is_killed}, the Brauer group of a rationally chain connected variety over an algebraically closed field is killed by some integer $d\geq 1$. In particular, the  theorem follows from    Lemma \ref{lem:tate_criterion}. 
         \end{proof}
 
 \begin{remark} Let $X$ be a smooth projective rationally chain connected variety over the algebraic closure of a finite field.  Then, it follows from  \cite[Theorem 0.4.(b)]{milne} and the fact that $\Br(X)$ is killed by some integer $d\geq 1$ (Proposition \ref{prop:Brauer_is_killed}) that the  	Brauer group of $X$ is finite.
 \end{remark}
 
 
 \begin{remark} Combining Remark \ref{remark:int_Tate} with 
 	Theorem \ref{thm:tate}, we obtain an integral analogue of the Tate conjecture for divisors on rationally connected smooth projective varieties.
 \end{remark}
 
 \begin{proof}[Proof of Theorem \ref{thm:picard_constant}]  Let $s$ and $t$ be points in $S$.
 	Let $\ell$ be a prime number such that $\ell \neq \mathrm{char}(k(s))$ and $\ell \neq \mathrm{char}(k(t))$. By the
 	smooth proper base-change theorem, $\dim_{\QQ_\ell} \mathrm{H}^2_{\et}(X_{\overline{s}}, \QQ_\ell) = \dim_{\QQ_\ell}
 	\mathrm{H}^2_{\et}(X_{\overline{t}},\QQ_\ell)$. In particular, since \'etale and fppf cohomology with $\mathbb Q_\ell$-coefficients agree,  Theorem
 	\ref{thm:tate} implies that the Picard ranks of $X_s$ and $X_t$ are equal.  
 \end{proof}
 

\section{Crystalline cohomology of separably rationally connected varieties}

We now prove a $p$-adic analogue of Theorem \ref{thm:tate} for separably rationally connected varieties; we refer the reader to \cite[IV.3]{kollar} for the basic definitions.


\begin{proposition}\label{prop:vanishing}
	Let $X$ be a smooth projective connected scheme over an algebraically closed field $k$ of characteristic $p\geq 0$. 
        \begin{enumerate}
            \item If $X$ is separably rationally connected, then $\mathrm{H}^0(X,\Omega^1_X) = \mathrm{H}^1(X,\mathcal
            O_X)=0$. If, in addition, $p>0$, then $\mathrm{H}^i_{\mathrm{crys}}(X/W)$ is torsion-free for $i\leq2$. 
            \item If $X$ is a Fano variety with $\mathrm{H}^0(X,\Omega^1_X) =0$, then $\mathrm{H}^1(X,\mathcal O_X)=0$.
            If, in addition, $p>0$, then $\mathrm{H}^i_{\mathrm{crys}}(X/W)$ is torsion-free for $i\leq2$.
	\end{enumerate}
\end{proposition}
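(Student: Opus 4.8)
The plan is to reduce the whole statement to the two coherent vanishings $\mathrm{H}^0(X,\Omega^1_X)=0$ and $\mathrm{H}^1(X,\mathcal O_X)=0$, establish these for both classes of varieties, and then read off the crystalline torsion-freeness formally. Granting the two vanishings, the Hodge--de Rham spectral sequence $E_1^{pq}=\mathrm{H}^q(X,\Omega^p_X)\Rightarrow \mathrm{H}^{p+q}_{\dR}(X/k)$ has trivial total degree $1$, so $\mathrm{H}^1_{\dR}(X/k)=0$. Feeding this into the universal coefficient sequences $0\to \mathrm{H}^i_{\crys}(X/W)/p\to \mathrm{H}^i_{\dR}(X/k)\to \mathrm{H}^{i+1}_{\crys}(X/W)[p]\to 0$ coming from $\mathrm{H}^\ast_{\crys}(X/W)\otimes^{\mathbb L}_W k\cong \mathrm{H}^\ast_{\dR}(X/k)$ gives, for $i=1$, that $\mathrm{H}^1_{\crys}(X/W)/p=0$ and $\mathrm{H}^2_{\crys}(X/W)[p]=0$. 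As these are finitely generated over the discrete valuation ring $W$, the former forces $\mathrm{H}^1_{\crys}(X/W)=0$ by Nakayama and the latter forces $\mathrm{H}^2_{\crys}(X/W)$ to be torsion-free; together with $\mathrm{H}^0_{\crys}(X/W)=W$ this settles the crystalline claims for $i\le 2$ uniformly in (1) and (2).

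For $\mathrm{H}^0(X,\Omega^1_X)=0$ I argue as follows. In case (2) it is assumed. In case (1), through a general point I take a very free curve $f\colon \mathbb P^1\to X$, so that $f^\ast T_X$ is ample and $f^\ast\Omega^1_X$ is a sum of line bundles of negative degree; a global $1$-form therefore restricts to zero on every such curve, and since these dominate $X$ and realize all tangent directions at a general point, it must vanish. In both cases $X$ is rationally chain connected, so every morphism from $X$ to an abelian variety contracts the covering rational curves and is constant; hence $\Alb(X)=0$, $(\Pic^0_X)_{\mathrm{red}}=0$, and $\Pic^0_X$ is an infinitesimal (finite, connected) group scheme with $\mathrm{H}^1(X,\mathcal O_X)=\operatorname{Lie}\Pic^0_X$.

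The crux is thus to show that this infinitesimal group scheme is trivial. The device is the Frobenius-semilinear operator $F$ on $\mathrm{H}^1(X,\mathcal O_X)$ (the restricted-Lie $p$-operation on $\operatorname{Lie}\Pic^0_X$), whose kernel I compute from the exact sequence $0\to \mathcal O_X\xrightarrow{a\mapsto a^p} F_\ast\mathcal O_X\xrightarrow{d} F_\ast B^1_X\to 0$, where $B^1_X=d\mathcal O_X\subset\Omega^1_X$. Since the absolute Frobenius is a homeomorphism one has $\mathrm{H}^i(X,F_\ast\mathcal F)=\mathrm{H}^i(X,\mathcal F)$, and the long exact sequence yields $\ker\bigl(F\colon \mathrm{H}^1(X,\mathcal O_X)\to\mathrm{H}^1(X,\mathcal O_X)\bigr)\cong \mathrm{H}^0(X,B^1_X)\hookrightarrow \mathrm{H}^0(X,\Omega^1_X)=0$. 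Hence $F$ is injective, so bijective, on $\mathrm{H}^1(X,\mathcal O_X)$; equivalently $\Pic^0_X$ has no unipotent ($\alpha_p$-type) part. A remaining multiplicative ($\mu_{p^n}$-type) part would, via Artin--Schreier theory ($\mathrm{H}^1_{\et}(X,\ZZ/p)\cong\{x:Fx=x\}$, which is nonzero as soon as $F$ is bijective on a nonzero space), produce a nontrivial connected $\ZZ/p$-cover; a separably rationally connected variety is simply connected, so in case (1) this is impossible and we conclude $\Pic^0_X=0$, i.e.\ $\mathrm{H}^1(X,\mathcal O_X)=0$.

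The hard point is exactly this last, purely inseparable, step in the Fano case (2), where simple connectivity is not available a priori: one must still exclude a multiplicative subgroup $\mu_{p^n}\subset\Pic^0_X$, equivalently a connected degree-$p$ étale cover $\pi\colon Y\to X$. Here $Y$ is again Fano, and $\mathrm{H}^0(Y,\Omega^1_Y)=0$ because $\pi_\ast\mathcal O_Y$ is a successive self-extension of $\mathcal O_X$, so $\mathrm{H}^0(Y,\Omega^1_Y)$ is filtered by copies of $\mathrm{H}^0(X,\Omega^1_X)=0$; I would try to turn the resulting relation $\chi(\mathcal O_Y)=p\,\chi(\mathcal O_X)$, or the boundedness of the $p$-part of $\Br(X)$ (killed by an integer, Proposition \ref{prop:Brauer_is_killed}) together with the slope-one purity of $\mathrm{H}^2_{\crys}(X/W)$ supplied by Corollary \ref{cor:crys}, into a contradiction. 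This exclusion of infinitesimal multiplicative-type Picard torsion, carried out without Kodaira vanishing, is the step I expect to be the main obstacle.
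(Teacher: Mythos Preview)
Your architecture is the same as the paper's: reduce everything to $\mathrm{H}^0(X,\Omega^1_X)=\mathrm{H}^1(X,\mathcal O_X)=0$, then get $\mathrm{H}^1_{\dR}(X/k)=0$ from Hodge--de~Rham and feed it into the universal coefficient sequence to kill the crystalline torsion. The injectivity of $F$ on $\mathrm{H}^1(X,\mathcal O_X)$ via $\ker F\hookrightarrow\mathrm{H}^0(X,B^1_X)\subset\mathrm{H}^0(X,\Omega^1_X)=0$ is exactly the paper's argument. Your handling of case~(1) via simple connectedness of separably rationally connected varieties is fine.

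The genuine gap is precisely the one you flag in case~(2), and your proposed workarounds do not close it. The relation $\chi(\mathcal O_Y)=p\,\chi(\mathcal O_X)$ is perfectly compatible with both $Y$ and $X$ being Fano with $\mathrm{H}^0(\Omega^1)=0$, so it yields no contradiction; the Brauer bound of Proposition~\ref{prop:Brauer_is_killed} and the slope statement of Corollary~\ref{cor:crys} are about $\mathrm{H}^2$ and say nothing directly about $\pi_1$ or $\mathrm{H}^1$. So you are genuinely stuck.

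The paper supplies exactly the missing input, and it applies uniformly to both cases: $\pi_1^{\et}(X)$ is finite (Chambert-Loir) and has trivial $p$-power torsion by a result of Esnault, hence $\mathrm{H}^1_{\et}(X,\mathbb F_p)=\Hom(\pi_1^{\et}(X),\mathbb F_p)=0$. The paper then runs your Frobenius argument in the opposite order: since $\mathrm{H}^1_{\et}(X,\mathbb F_p)\otimes k$ is the semisimple part of $F$ on $\mathrm{H}^1(X,\mathcal O_X)$, the vanishing forces $F$ to be nilpotent; combined with the injectivity of $F$ (which you also prove), this forces $\mathrm{H}^1(X,\mathcal O_X)=0$. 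So the single idea you are missing is the external vanishing $\mathrm{H}^1_{\et}(X,\mathbb F_p)=0$ for Fano varieties in positive characteristic, and no manipulation of the putative $\mathbb Z/p$-cover is needed.
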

\begin{proof} 
    We may and do assume that $p>0$. If $X$ is separably rationally connected, then it follows from
    \cite[IV.3.8]{kollar} that $\mathrm{H}^0(X,\Omega^1_X) =0$ which we may now assume to prove both statements of the
    proposition. Now, in the separably rationally connected case, the result about $\mathrm{H}^1(X,\OO_X)$ is contained in
    \cite{gounelas}; we extend the arguments to the Fano case as well. Since $\pi_1^{\et}(X)$ is finite
    \cite{chambertloirpi1} and the $p$-power torsion in $\pi_1^{\et}(X)$ is trivial by a result of Esnault
    \cite[Proposition 8.4]{chambertloirbourbaki}, we see that $\mathrm{H}^1_{\et}(X,\FF_p) =
    \Hom(\pi_1^{\et}(X),\FF_p)=0$. Note that $\mathrm{H}^1(X,\FF_p)\otimes k$ is the semi-simple part of the action of
    Frobenius on $\mathrm{H}^1(X,\mathcal{O}_X)$. Thus, Frobenius is nilpotent on $\mathrm{H}^1(X,\OO_X)$. However,
    Frobenius is also injective on $\mathrm{H}^1(X,\mathcal{O}_X)$, as its kernel is $\mathrm{H}^0(X, \OO_X/F\OO_X)\subset
    \mathrm{H}^0(X,\Omega^1_X)=0$. Since $F$ is both injective and nilpotent on $\mathrm{H}^1(X,\mathcal{O}_X)$, the group $\mathrm{H}^1(X,\mathcal{O}_X)$ has to be zero.
	
    To conclude the proof, one can now apply \cite[Thm.~II.5.16]{Illusie} or argue more directly as follows. By \cite[1.3.7]{Illusie2},  there is a universal coefficient exact sequence $$ 0\to \mathrm{H}^1_\crys(X/W)\otimes k \to \mathrm{H}^1_\dR(X/k) \to
    \mathrm{Tor}_1^W(\mathrm{H}^2_\crys(X/W),k) \to 0. $$  The existence of the Fr\"ohlicher spectral sequence shows that
    $\mathrm{h}^{0,1}(X)+\mathrm{h}^{1,0}(X)\geq \mathrm{h}^1_\dR(X/k).$ As $\mathrm{h}^{0,1}(X) = \mathrm{h}^{1,0}(X) =0$, it follows that $\mathrm{h}^1_\dR(X/k) =0$.
    Thus, $$\mathrm{Tor}_1^W(\mathrm{H}^2_\crys(X/W),k) =0,$$ thereby showing that $\mathrm{H}^2_\crys(X/W)$ is torsion-free. That
    $\mathrm{H}^1_\crys(X/W)$ is torsion free for any smooth projective variety is a standard result.
\end{proof}

\begin{proof}[Proof of Theorem \ref{thm:src}] 
    By Proposition \ref{prop:vanishing}, $\mathrm{H}^2_{\crys}(X/W)$ is torsion-free and $\mathrm{H}^0(X,\Omega^1_X) =
    \mathrm{H}^0(Z_1\Omega^1_Z) =0$, so that the result follows from \cite[Thm.~II.5.14]{Illusie} and Theorem \ref{thm:tate}. 
\end{proof}

\section{The index in families of Fano varieties}

We first show that the (geometric) index $r(X)$ of a Fano variety $X$  is bounded by $\dim(X)+ 1$.
\begin{lemma}\label{lem:indexdimension}
	Let $X$ be a  Fano variety over a  field $k$. Then  $r(X)\leq \dim(X)+1$.
\end{lemma}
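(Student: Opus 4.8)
The plan is to reduce to an algebraically closed field and then exploit the existence of a rational curve of small anticanonical degree. Since both $r(X)$ and $\dim(X)$ are insensitive to base change to $\bar{k}$, I may and do assume that $k=\bar{k}$. By the definition of the index, write $-K_X = r\,H$ in $\Pic X$, where $r=r(X)$ and $H$ is a line bundle. The first observation is that $H$ is ample: as $-K_X$ is ample by the Fano hypothesis and $-K_X = rH$ with $r\geq 1$, the line bundle $H$ is ample because a line bundle is ample as soon as some positive tensor power of it is ample.

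The key input is the existence of a rational curve of bounded anticanonical degree, which holds in arbitrary characteristic (indeed, Mori's original bend-and-break argument proceeds via reduction modulo $p$, so this is precisely the sort of statement that survives in positive characteristic, unlike the Kodaira-vanishing-based arguments). Concretely, since $X$ is a smooth projective Fano variety over the algebraically closed field $k$, there is a rational curve $C\subset X$ with
\[
0 < -K_X \cdot C \leq \dim(X) + 1;
\]
see \cite[Ch.~V]{kollar}. The positivity on the left is automatic from the ampleness of $-K_X$.

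It then remains to combine these two facts. Using $-K_X = rH$ and the multiplicativity of intersection numbers (i.e.\ pulling back along the normalization $\mathbb{P}^1 \to C$ and taking degrees), we obtain
\[
-K_X \cdot C = r\,(H\cdot C).
\]
Since $H$ is ample and $C$ is a curve, the intersection number $H\cdot C$ is a positive integer, so $H\cdot C \geq 1$. Therefore
\[
r \leq r\,(H\cdot C) = -K_X \cdot C \leq \dim(X) + 1,
\]
which is the desired bound.

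The main obstacle is the Mori-theoretic input: one needs the existence of a rational curve with $-K_X \cdot C \leq \dim(X)+1$ in \emph{arbitrary} characteristic. Everything else in the argument is elementary linear bookkeeping with intersection numbers, but this bend-and-break statement is the substantive ingredient, and it is exactly the step where characteristic-free methods (rather than vanishing theorems) are both essential and available.
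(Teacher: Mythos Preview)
Your proof is correct and follows essentially the same route as the paper: reduce to an algebraically closed field, invoke Mori's rational curve with $-K_X\cdot C\le \dim(X)+1$ from \cite[V.1.6.1]{kollar}, and conclude via $r(X)\le r(X)\,(H\cdot C)=-K_X\cdot C$. Your write-up simply makes explicit the ampleness of $H$ and the inequality $H\cdot C\ge 1$ that the paper's proof uses implicitly.
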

\begin{proof} We may and do assume that $k$ is algebraically closed. Let $n:=\dim X$. 
	By \cite[V.1.6.1]{kollar}, there is a rational curve $C$ so that $-K_X.C\leq n+1$. Writing $-K_X=r(X)H$, we get $r(X)\leq r(X)H.C=-K_X.C\leq n+1$.
\end{proof}

\begin{proposition}\label{prop:constancy_of_p_index}
    Let $S$ be a trait with generic point $\eta$ and closed point $s$. Let $p = \mathrm{char}(k(s))$. Let $f:X\to S$
    be a smooth proper morphism of schemes whose geometric fibres are Fano varieties. 
    \begin{enumerate}
        \item If $p=0$, then $r(X_s) = r(X_\eta)$. 
        \item If $p>0$, then there is an integer $i\geq 0$ such that
        $r(X_s) = p^{i} r(X_\eta)$. 
        \item If  $r(X_s)>r(X_\eta)$, then   
        $0<p\leq (n+1)/r(X_\eta)$.
    \end{enumerate}
\end{proposition}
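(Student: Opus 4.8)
The plan is to work entirely over the trait $S$ and compare the Picard groups of the two geometric fibres $X_{\bar\eta}$ and $X_{\bar s}$ via the specialization homomorphism. First I would recall that for a smooth proper morphism $X \to S$ over a trait, there is a specialization map on Picard groups (equivalently on Néron--Severi groups) $\mathrm{sp}: \NS(X_{\bar\eta}) \to \NS(X_{\bar s})$, and by the constancy of the Picard rank (Theorem~\ref{thm:picard_constant}, applicable because Fano varieties are rationally chain connected) both groups have the same rank $\rho$. The key input I would invoke is that this specialization map is injective with torsion-free cokernel whose order is a power of $p$; more precisely, after tensoring with $\ZZ_\ell$ for $\ell \neq p$ the map is an isomorphism (by smooth proper base change for $\ell$-adic cohomology combined with Theorem~\ref{thm:tate}), so the only obstruction to $\mathrm{sp}$ being an isomorphism is $p$-primary. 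Crucially, the canonical classes match up under specialization: $\mathrm{sp}(-K_{X_{\bar\eta}}) = -K_{X_{\bar s}}$, since the relative dualizing sheaf $\omega_{X/S}$ restricts to the canonical sheaf on each fibre.

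The heart of the argument is then a divisibility comparison. Write $-K_{X_{\bar\eta}} = r(X_\eta) \cdot H_\eta$ in $\Pic(X_{\bar\eta})$ and $-K_{X_{\bar s}} = r(X_s) \cdot H_s$ in $\Pic(X_{\bar s})$, where $H_\eta, H_s$ are primitive (i.e.\ the largest divisors of the respective anticanonical classes). Applying $\mathrm{sp}$ to the first relation gives $-K_{X_{\bar s}} = r(X_\eta) \cdot \mathrm{sp}(H_\eta)$, which shows immediately that $r(X_\eta) \mid r(X_s)$, so I can write $r(X_s) = m \cdot r(X_\eta)$ for some integer $m \geq 1$. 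To control $m$, I would argue that $m$ must be a power of $p$: any prime $\ell \neq p$ dividing $r(X_s)$ but not $r(X_\eta)$ would force $\mathrm{sp}(H_\eta)$ to be $\ell$-divisible in $\NS(X_{\bar s})$ while being primitive-at-$\ell$ in $\NS(X_{\bar\eta})$, contradicting that $\mathrm{sp}$ is an isomorphism after $\otimes\,\ZZ_\ell$ (equivalently, that the cokernel of $\mathrm{sp}$ is $p$-primary). This yields $m = p^i$, which is exactly statement (2); statement (1) is the case $p=0$ where $\mathrm{sp}$ is an isomorphism on the nose and so $m=1$.

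For statement (3), I would combine the divisibility $r(X_s) = p^i r(X_\eta)$ with the dimension bound from Lemma~\ref{lem:indexdimension}. If $r(X_s) > r(X_\eta)$ then necessarily $i \geq 1$, hence $p \mid p^i = r(X_s)/r(X_\eta)$ and in particular $p \cdot r(X_\eta) \leq r(X_s)$. Since $r(X_s) \leq n+1$ by Lemma~\ref{lem:indexdimension} (where $n = \dim X_s = \dim X_\eta$), we obtain $p \cdot r(X_\eta) \leq n+1$, i.e.\ $0 < p \leq (n+1)/r(X_\eta)$, as claimed.

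The main obstacle I expect is justifying the properties of the specialization map $\mathrm{sp}$ carefully: namely that it is well-defined on Néron--Severi groups, injective, compatible with the canonical class, and an isomorphism away from $p$. The compatibility with the anticanonical divisor is clean via the relative dualizing sheaf, but the injectivity and the $p$-primary-cokernel statement require invoking the comparison between $\NS \otimes \ZZ_\ell$ and $\ell$-adic cohomology (Theorem~\ref{thm:tate}) together with smooth proper base change, and handling the subtlety that specialization of line bundles over a trait (as opposed to cohomology classes) is where torsion and $p$-divisibility phenomena genuinely enter. I would want to phrase this so that the only arithmetic leakage is at the prime $p$, which is precisely what makes the $p^i$ appear.
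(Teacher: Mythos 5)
Your proposal is correct and follows essentially the same route as the paper: the specialization map on Picard/N\'eron--Severi groups (constructed via the isomorphism $\Pic(X)\cong\Pic(X_\eta)$ over the trait), the identification $\NS\otimes\ZZ_\ell\cong \mathrm{H}^2_{\et}(-,\ZZ_\ell)$ on both fibres from Theorem~\ref{thm:tate} combined with smooth proper base change for $\ell\neq p$, compatibility of $-K$ with specialization, and Lemma~\ref{lem:indexdimension} for part (3). The only difference is that you spell out the divisibility bookkeeping (that the cokernel of $\mathrm{sp}$ is $p$-primary forces $r(X_s)/r(X_\eta)=p^i$) more explicitly than the paper does.
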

\begin{proof} We may and do assume that $k(s)$ is algebraically closed. 
    Let $X_{\overline{\eta}}$ denote the geometric generic fibre of $X\to S$.  We may and do assume that
    $\NS(X_\eta)  = \NS(X_{\overline{\eta}})$. 

    Firstly, since $S$ is regular integral and noetherian, the homomorphism  $\Pic(X) \to \Pic(X_\eta)$ is an
    isomorphism \cite[Lem.~3.1.1]{Har94}. 

   For all $\ell \neq p$, the natural morphism $\NS(X_{\eta})\otimes \ZZ_\ell = \NS(X_{\overline{\eta}})\otimes \ZZ_\ell \to
    \mathrm{H}^2_{\et}(X_{ \overline{\eta}},\ZZ_\ell)$ is an isomorphism (Theorem \ref{thm:tate}). Similarly, for all
    $\ell \neq p$, the natural morphism  $\NS(X_{s})\otimes \ZZ_\ell \to \mathrm{H}^2_{\et}(X_{ {s}},\ZZ_\ell)$ is an
    isomorphism (Theorem \ref{thm:tate}). By smooth proper base change for $\ell$-adic cohomology, for all $\ell \neq
    p$,  there is a natural $\mathbb Z_\ell$-linear isomorphism $\mathrm{H}^2_{\et}(X_{ \overline{\eta}},\ZZ_\ell)
    =\mathrm{H}^2_{\et}(X_{ {s}},\ZZ_\ell)$.  This proves $(1)$ and $(2)$.
    
    Finally, to prove $(3)$, by $(1)$, we may and do assume that $p>0$.  Also, by our assumption and $(2)$, we have that $r(X_s) = p^i r(X_\eta)$ with $i\geq 1$. Therefore, by  Lemma \ref{lem:indexdimension},  we immediately get $p\leq p^i = r(X_s)/r(X_\eta) \leq 
    (n+1)/r(X_\eta)$.
 \end{proof}

\begin{proof}[Proof of Theorem \ref{thm:index_almost}]
	This follows from Proposition \ref{prop:constancy_of_p_index}.
\end{proof}

\begin{proposition}\label{prop:index_almost_2}
    Let $S$ be a trait with generic point $\eta$ and closed point $s$. Let $p = \mathrm{char}(k(s))$. Let $f:X\to S$
    be a smooth proper morphism whose geometric fibres are Fano varieties. Assume any one of the following conditions:
    \begin{enumerate}
        \item $p>\dim X_s + 1$, or
        \item $\mathrm{H}^2(X_s, \OO_{X_s})=0$, or
        \item  there exists an integral $1$-cycle $T$ on $X_{\bar{\eta}}$ such that $-K_{X_{\bar{\eta}}}. T=r(X_{\bar{\eta}})$.
    \end{enumerate}
    Then, the index is constant on the fibres of $f:X\to S$, i.e.\ $r(X_\eta)=r(X_s)$. 
\end{proposition}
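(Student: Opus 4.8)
The plan is to reduce everything to Proposition~\ref{prop:constancy_of_p_index}, so that in each of the three cases it only remains to exclude a genuine jump by a positive power of $p$. After base changing along the strict henselization of $S$ (which alters neither the geometric fibres nor the geometric indices, and which forces $\Br(S)=0$) and arranging $\NS(X_\eta)=\NS(X_{\bar\eta})$ exactly as in the proof of Proposition~\ref{prop:constancy_of_p_index}, I may assume $p>0$; otherwise the claim is Proposition~\ref{prop:constancy_of_p_index}(1). Since $X$ is regular and $X_s$ is a principal divisor, restriction gives an isomorphism $\Pic(X)\to\Pic(X_\eta)$, and composing with restriction to $X_s$ yields a specialization homomorphism $\mathrm{sp}\colon\NS(X_\eta)\to\NS(X_s)$ sending $-K_{X_\eta}$ to $-K_{X_s}$. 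Writing $n=\dim X_s$, Proposition~\ref{prop:constancy_of_p_index}(2) gives $r(X_s)=p^i r(X_\eta)$ with $i\geq 0$, and the whole task is to show $i=0$ under each hypothesis. For later use I record, as in the proof of Proposition~\ref{prop:constancy_of_p_index}, that for every $\ell\neq p$ the map $\mathrm{sp}\otimes\ZZ_\ell$ is an isomorphism: by Theorem~\ref{thm:tate} both sides are identified with $\mathrm{H}^2_{\et}(-,\ZZ_\ell(1))$, and under these identifications $\mathrm{sp}\otimes\ZZ_\ell$ is the smooth proper base change isomorphism, by compatibility of cycle classes with specialization. In particular $\mathrm{sp}$ is injective modulo torsion and its cokernel is a finite $p$-group.

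Case (1) is then immediate. If $i\geq 1$ then $r(X_s)>r(X_\eta)$, so Proposition~\ref{prop:constancy_of_p_index}(3) forces $0<p\leq (n+1)/r(X_\eta)\leq n+1$, contradicting $p>n+1$. Hence $i=0$ and $r(X_s)=r(X_\eta)$.

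For case (3) I would spread out the curve. After replacing $S$ by its normalization in a suitable finite extension of $k(\eta)$ I may assume the integral $1$-cycle $T$ is defined on $X_\eta$; let $\mathcal T\subset X$ be its Zariski closure, an integral scheme dominating the trait $S$ and hence flat over $S$. As degrees of a line bundle are locally constant in the flat proper family $\mathcal T\to S$, we get $-K_{X_s}\cdot\mathcal T_s=-K_{X_\eta}\cdot T=r(X_\eta)$. Writing $-K_{X_s}=r(X_s)G_s$ with $G_s$ ample, this reads $r(X_\eta)=r(X_s)\,(G_s\cdot\mathcal T_s)$ with $G_s\cdot\mathcal T_s$ a positive integer, whence $r(X_s)\mid r(X_\eta)$; combined with $r(X_\eta)\mid r(X_s)$ (from $r(X_s)=p^i r(X_\eta)$) this gives $i=0$.

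Case (2) is the heart of the matter and the step I expect to be the main obstacle. The hypothesis $\mathrm{H}^2(X_s,\OO_{X_s})=0$ kills the obstruction space to smoothness of the relative Picard scheme, so $\Pic_{X/S}\to S$ is smooth along the special fibre. Since $S$ is a strictly henselian trait, sections of a smooth morphism lift from the closed point, so $\Pic_{X/S}(S)\to\Pic_{X/S}(s)$ is surjective; as $\Pic(S)=0$ and $\Br(S)=0$ one has $\Pic(X)=\Pic_{X/S}(S)$, so in fact every line bundle on $X_s$ lifts to $X$. Consequently $\mathrm{sp}\colon\NS(X_\eta)\to\NS(X_s)$ is surjective. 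Together with the injectivity modulo torsion and the $p$-power cokernel recorded above, this forces $\mathrm{sp}$ to be an isomorphism modulo torsion, whence it preserves the divisibility of the anticanonical class and $r(X_s)=r(X_\eta)$, i.e.\ $i=0$. The delicate points here are the representability and the smoothness criterion for $\Pic_{X/S}$ (governed by $\mathrm{H}^1$ and $\mathrm{H}^2$ of the structure sheaf of the special fibre), the lifting of line bundles, and the identification $\Pic(X)=\Pic_{X/S}(S)$, for which the vanishing of $\Br(S)$ over a strictly henselian base is exactly what is needed; the residual bookkeeping with torsion in $\NS$ is harmless since in the only subtle (mixed characteristic) case $X_\eta$ has torsion-free Néron--Severi group.
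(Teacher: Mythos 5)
Your proposal is correct and follows essentially the same route as the paper: case (1) is deduced from Proposition \ref{prop:constancy_of_p_index}(3), case (2) from the vanishing of the obstruction space $\mathrm{H}^2(X_s,\OO_{X_s})$ to lifting line bundles (you phrase this via smoothness of $\Pic_{X/S}$ over the henselian trait, the paper via formal lifting and algebraization, but it is the same deformation-theoretic input), and case (3) by specializing the $1$-cycle and using constancy of the intersection number with $-K$ to get $r(X_s)\mid r(X_\eta)$. The only cosmetic slip is in case (3), where $T$ is an integral $1$-cycle $\sum m_iC_i$ rather than an integral subscheme, so one takes the closure of each component $C_i$ separately, exactly as in the paper.
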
 

\begin{remark} 
    Koll\'ar asked in \cite[V.1.13]{kollar} whether there exists a rational curve $T$ (instead of an integral sum of
    $1$-cycles) which satisfies the third condition of the proposition.  
\end{remark}

\begin{proof}[Proof of Proposition \ref{prop:index_almost_2}] 
    We may assume that $k(s)$ is algebraically closed. Assume that $p>\dim X_s+1$. Then, the result follows from $(3)$
    in Proposition \ref{prop:constancy_of_p_index}.
    
    Under the second assumption, assume that $r(X_s) = p^ir(X_\eta)$ with $i>0$. Then there exists an ample divisor
    $H_0\in\Pic(X_s)$ so that $-K_{X_s}=p^ir(X_\eta)H_0$. But the assumption $\mathrm{H}^2(X_s, \OO_{X_s})=0$ implies
    that all the obstructions to lifting a line bundle to the formal completion vanish, so $H_0$ lifts to a divisor
    $H\in\Pic(X_\eta)$. This gives a contradiction to the definition of the index $r(X_\eta)$.
 
    (The final assumption was suggested to us by Jason Starr.) Firstly, replacing $S$ by a finite flat cover $S'\to S$ with $S'$ a trait if necessary, we may and do assume that $T$ is descends to $X_{\eta}$, and that $\NS(X_{\eta}) = \NS(X_{\bar{\eta}})$. By abuse of notation, let $T$ be an integral $1$-cycle on $X_\eta$ such that  $-K_{X_\eta}.T=rH.T=r$, where $H$ is a
    primitive element of $\NS(X_\eta)$. Now, if $T=\sum m_i C_i$ for integral curves $C_i$ on $X_\eta$, then the
    curves $C_i$ all specialise to (possibly reducible) curves $\bar{C}_i$ in the special fibre $X_s$ but the
    intersection number $r=-K_{X_\eta}.T = -K_{X_s}.(\sum m_i \bar{C}_i)$ stays constant. Since $r(X_s)$ divides
    $K_{X_s}. (\sum m_i \bar{C}_i)$, this gives that $r(X_s)\mid r$. We conclude that $r(X_s) = r(X_\eta)$ by
    Proposition \ref{prop:constancy_of_p_index}.
\end{proof}
 
 To conclude this section, we now show that the integral Hodge conjecture for $1$-cycles on Fano varieties implies that condition (3) in Proposition \ref{prop:index_almost_2} holds for all Fano varieties in characteristic zero (see Proposition \ref{prop:ihc} below). In particular, assuming the integral Hodge conjecture, it follows from Proposition \ref{prop:index_almost_2} and Proposition \ref{prop:ihc} that the index is constant in families of Fano varieties of mixed characteristic.
 
 \begin{proposition}\label{prop:ihc}
 	Assume the integral Hodge conjecture for $1$-cycles on Fano varieties over $\mathbb C$. Then, for an algebraically closed field $k$ of characteristic zero and a Fano variety $X$ over $k$, there exists an integral $1$-cycle $T$ on $X$ such that $-K_X.T = r(X)$.
 \end{proposition}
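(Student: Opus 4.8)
The plan is to reduce to the case $k=\CC$ by a standard spreading-out argument, and then to exploit the fact that for a Fano (hence rationally connected) variety over $\CC$ the group $\mathrm{H}^{2n-2}(X,\ZZ)$ consists entirely of Hodge classes, so that the assumed integral Hodge conjecture immediately supplies the $1$-cycle we want.

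Set $n=\dim X$ and write $-K_X=r(X)H$ with $H$ a primitive element of $\Pic(X)=\NS(X)$. Since $-K_X\cdot T=r(X)(H\cdot T)$, the proposition is equivalent to the existence of an integral $1$-cycle $T$ with $H\cdot T=1$. The first step is to reduce to $k=\CC$. The variety $X$ is defined over a finitely generated subfield $k_0\subseteq k$, which embeds into $\CC$; because $\NS$ (and hence the index $r(X)$ and the class $H$) is unchanged under extension of algebraically closed fields of characteristic zero, it suffices to produce such a $T$ over $\CC$ and then transport it back. Concretely, a $1$-cycle over $\CC$ is already defined over a finitely generated extension $k_1$ of $k_0$; writing $k_1$ as the function field of a $k_0$-variety $W$, the cycle spreads out to a relative cycle over a dense open $U\subseteq W$, and specialising at a $k$-point of $U_k$ (which exists since $k$ is algebraically closed and contains $k_0$) yields a $1$-cycle over $k$ whose intersection number against the fixed class $H$ is unchanged. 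From now on I assume $X$ is a Fano variety over $\CC$.

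Next I would record the Hodge-theoretic input. Being Fano, $X$ is rationally connected (Campana, KMM), so $\mathrm{H}^0(X,\Omega^p_X)=0$ for all $p\geq1$ and $\mathrm{H}^q(X,\OO_X)=0$ for all $q\geq1$; moreover $X$ is simply connected, so $\mathrm{H}^2(X,\ZZ)$ is torsion-free. By the exponential sequence and $\mathrm{H}^1(X,\OO_X)=\mathrm{H}^2(X,\OO_X)=0$, the first Chern class identifies $\NS(X)=\Pic(X)$ with $\mathrm{H}^2(X,\ZZ)$, so $H$ is a primitive element of the torsion-free lattice $\mathrm{H}^2(X,\ZZ)$. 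Dually, the Serre-duality relation $h^{p,q}(X)=h^{n-p,n-q}(X)$ together with $h^{2,0}(X)=h^{0,2}(X)=0$ forces $\mathrm{H}^{2n-2}(X,\CC)=\mathrm{H}^{n-1,n-1}(X)$, so every integral class in $\mathrm{H}^{2n-2}(X,\ZZ)$ is a Hodge class. Now I would produce the required class by pure lattice theory: by Poincar\'e duality the cup-product pairing $\mathrm{H}^2(X,\ZZ)\times\mathrm{H}^{2n-2}(X,\ZZ)\to\mathrm{H}^{2n}(X,\ZZ)=\ZZ$ is unimodular modulo torsion, and since $\mathrm{H}^2(X,\ZZ)$ is torsion-free and $H$ is primitive, the functional $H\cdot(-)$ on $\mathrm{H}^{2n-2}(X,\ZZ)$ is surjective. (If its image were $d\ZZ$ with $d>1$, then $H/d$ would pair integrally with all of $\mathrm{H}^{2n-2}(X,\ZZ)$ and hence lie in $\mathrm{H}^2(X,\ZZ)$ by unimodularity, contradicting primitivity of $H$.) Choosing $\gamma\in\mathrm{H}^{2n-2}(X,\ZZ)$ with $H\cdot\gamma=1$, the assumed integral Hodge conjecture for $1$-cycles yields an integral $1$-cycle $T$ with $[T]=\gamma$, whence $-K_X\cdot T=r(X)(H\cdot\gamma)=r(X)$.

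The main obstacle is conceptual rather than computational: the entire argument turns on the observation that, for a Fano variety, $\mathrm{H}^{2n-2}(X,\ZZ)$ is purely of Hodge type $(n-1,n-1)$, which is precisely what converts the elementary existence of a topological class pairing to $1$ with the primitive class $H$ into the \emph{algebraic} statement we need. Once this is established the integral Hodge conjecture is used as a black box, and the only point requiring genuine care is the descent and specialisation argument allowing one to work over $\CC$, together with the verification that the index and the relevant intersection numbers are preserved under extension of the algebraically closed base field.
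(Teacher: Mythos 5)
Your proof is correct and follows essentially the same route as the paper: reduce to $k=\CC$ by spreading out and specialisation, use $\mathrm{H}^i(X,\OO_X)=0$ to see that $\mathrm{H}^{2n-2}(X,\CC)=\mathrm{H}^{n-1,n-1}(X)$, produce a class pairing to $1$ with the primitive class $H$ via unimodularity of Poincar\'e duality, and invoke the integral Hodge conjecture to represent it by a $1$-cycle. If anything, your lattice-theoretic justification that $H\cdot(-)$ is surjective is slightly more careful than the paper's ``send $H\mapsto 1$ and extend by zero'' phrasing.
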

 \begin{proof} First, assume $k=\mathbb C$. 
 	Let $n:=\dim X$. Since $X$ is Fano, the Hodge decomposition and the fact
 	that $\mathrm{H}^i(X, \OO_X)=0$ for $i>0$ give $\mathrm{H}^{2n-2}(X, \CC) = \mathrm{H}^{n-1,n-1}(X)$. From Poincar\'e duality and the fact that the Picard group is torsion free, we have a unimodular pairing
 	$\mathrm{H}^2(X,\ZZ)\times \mathrm{H}^{2n-2}(X,\ZZ)\to\ZZ$ given by cup product. Write now $-K_X=rH$ for
 	$r=r(X)$ the index and define a homomorphism $\mathrm{H}^2(X,\ZZ)\to\ZZ$ sending $H\mapsto 1$ and extending by
 	zero elsewhere. Unimodularity of the cup product implies that there is a class $T\in \mathrm{H}^{2n-2}(X,\ZZ)$ so
 	that $(H,T)=1$. By the surjectivity of the cycle class map  $\mathrm{CH}_1(X) \to \mathrm{H}^{n-1,n-1}(X)\cap \mathrm{H}^{2n-2}(X,\ZZ)$ (which follows from our assumptions), the cohomology class $T$ can be represented by a cycle $\sum m_i C_i$, where
 	$m_i\in\ZZ$ and $C_i$ irreducible curves on $X$. This concludes the proof of the proposition for $k=\mathbb C$.
 	
 	To conclude the proof of the proposition, let $X$ be a Fano variety over an algebraically closed field $k$ of characteristic zero.   Let $K\subset k$ be an algebraically closed subfield  with an embedding $K\to \mathbb C$ and let   $Y\to \Spec K$ be a Fano variety over $K$  such that $Y_k \cong X$. Then, by what we have just shown, there is an integral $1$-cycle $T'$ on $Y_{\mathbb C}$ such that $-K_{Y_{\CC}}.T' = r(Y_{\CC}) = r(Y) = r(X)$.  We now use a standard specialization argument to conclude the proof.
 	
 	Let $K\subset L\subset \mathbb C$ be a   subfield of $\mathbb C$ which is finitely generated over $K$ such that the integral $1$-cycle $T'$ on $Y_{\mathbb C}$ descends to an integral $1$-cycle $T''$ on $Y_L$.  Moreover, let 
 $U$ be a quasi-projective integral scheme over $K$ whose function field  $K(U)$ is $L$,  and let $\mathcal{Y}\to U$ be a smooth proper morphism whose geometric fibres are Fano varieties such that   $\mathcal{Y}\times_U \Spec  L\cong Y_{L}$. Let $\mathcal{T}$ be the closure of $T''$ in $\mathcal Y$, and note that $\mathcal T$ extends    the integral $1$-cycle $T''$ on $Y_{L}$. Now, note that the generic fibres of $Y\times_K U\to U$ and $\mathcal Y\to U$ are   isomorphic over $L$. Therefore,  replacing $U$ by a dense open if necessary,   we have that $Y\times_K U$ is isomorphic to $\mathcal Y$ over $U$ (by ``spreading out'' of isomorphisms).  Let $u$ be  a closed point of $U$. The integral $1$-cycle $\mathcal T$ on $Y\times_K U$ restricts to an integral $1$-cycle $\mathcal T_u$ on $Y=Y\times_K \Spec k(u)$ such that $-K_{Y}.\mathcal T_u = r(Y)$. Define $T$ to be the $\mathcal T_u \times_K \Spec k$. Note that $T$ is an integral $1$-cycle on $X$ with  the sought property.
 \end{proof}
  
  \begin{corollary}
Let $S$ be a trait with generic point $\eta$ and closed point $s$. Let $X\to S$ be a smooth proper morphism of schemes whose geometric fibres are Fano varieties. If $\mathrm{char}(k(\eta)) =0$ and the integral Hodge conjecture holds for $1$-cycles on Fano varieties over $\mathbb C$, then $r(X_s) = r(X_\eta)$.
  \end{corollary}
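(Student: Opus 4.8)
The plan is to recognize the corollary as a direct synthesis of Proposition \ref{prop:ihc} with the third criterion of Proposition \ref{prop:index_almost_2}. First I would reduce the statement to verifying condition $(3)$ of Proposition \ref{prop:index_almost_2}, namely the existence of an integral $1$-cycle $T$ on the geometric generic fibre $X_{\bar\eta}$ with $-K_{X_{\bar\eta}}. T = r(X_{\bar\eta})$. Once such a cycle is produced, Proposition \ref{prop:index_almost_2} immediately yields $r(X_s) = r(X_\eta)$, and no further input is required. (Note that when $\mathrm{char}(k(s)) = 0$ as well, the conclusion already follows from part $(1)$ of Proposition \ref{prop:constancy_of_p_index} without appealing to the integral Hodge conjecture; the content of the corollary is therefore really in the mixed-characteristic case.)

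Next I would produce the cycle. Since $\mathrm{char}(k(\eta)) = 0$, the algebraic closure $\overline{k(\eta)}$ is an algebraically closed field of characteristic zero, and the geometric generic fibre $X_{\bar\eta} = X_\eta \times_{k(\eta)} \overline{k(\eta)}$ is a Fano variety over it. Granting the integral Hodge conjecture for $1$-cycles on Fano varieties over $\mathbb{C}$, Proposition \ref{prop:ihc}, applied with the algebraically closed field $\overline{k(\eta)}$ and the Fano variety $X_{\bar\eta}$, furnishes an integral $1$-cycle $T$ on $X_{\bar\eta}$ with $-K_{X_{\bar\eta}}. T = r(X_{\bar\eta})$. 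By the very definition of the geometric index one has $r(X_\eta) = r(X_{\bar\eta})$, so $T$ is precisely the cycle demanded by condition $(3)$.

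Combining the two steps concludes the proof: the integral Hodge conjecture supplies the cycle, criterion $(3)$ of Proposition \ref{prop:index_almost_2} is thereby met, and the constancy $r(X_s) = r(X_\eta)$ follows. The only real difficulty here is external rather than internal, residing in the two ingredients already assembled: the unimodularity and cycle-class argument establishing Proposition \ref{prop:ihc} over $\mathbb{C}$ (together with the specialization argument descending it to an arbitrary algebraically closed field of characteristic zero), and the specialization-of-cycles argument underlying Proposition \ref{prop:index_almost_2}$(3)$. Given those, the present statement is a formal consequence, and the single point requiring care is merely to confirm that the characteristic-zero hypothesis on $\eta$ guarantees that $X_{\bar\eta}$ is a Fano variety over an algebraically closed field of characteristic zero, so that Proposition \ref{prop:ihc} genuinely applies.
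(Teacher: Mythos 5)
Your proposal is correct and matches the paper's proof, which likewise deduces the corollary directly from Proposition \ref{prop:ihc} (applied over the algebraically closed field $\overline{k(\eta)}$ of characteristic zero) together with condition $(3)$ of Proposition \ref{prop:index_almost_2}. You have merely spelled out the intermediate verification that the paper leaves implicit.
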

\begin{proof}
	This follows from $(3)$ of  Proposition \ref{prop:index_almost_2} and Proposition \ref{prop:ihc}.
\end{proof}
\appendix

\section{Varieties with $\mathrm{CH}_0(X)=\ZZ$}
To show that the Brauer group of a rationally chain connected
variety is killed by some positive integer (Proposition \ref{prop:Brauer_is_killed}), one can also argue using the
decomposition of the diagonal of Bloch-Srinivas, as we show now.

\begin{theorem}\label{thm:appendix}
	Let $X$ be a smooth projective variety over an algebraically closed field $k$ of characteristic $p\geq0$. Assume that there is an algebraically closed field $\Omega$ of infinite transcendence degree over $k$ such that
	that $\mathrm{CH}_0(X_\Omega) = \ZZ$. Then there exists an integer $m \geq 1$ such that $\Br(X)$ is $m$-torsion. 
\end{theorem}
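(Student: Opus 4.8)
The plan is to obtain from the hypothesis the Bloch--Srinivas decomposition of the diagonal and to let it act on the flat cohomology groups that compute $\Br(X)$. Set $n=\dim X$ and fix a closed point $x_0\in X$. Restricting the cycle $\Delta-(\{x_0\}\times X)\in\CH^n(X\times X)$ to the generic fibre of the second projection $p_2\colon X\times X\to X$ yields the class $[\delta]-[x_0]\in\CH_0(X_{k(X)})$, where $\delta$ is the tautological $k(X)$-point of $X_{k(X)}$. As $k(X)$ embeds into $\Omega$ and $\CH_0(X_\Omega)=\ZZ$, a spreading-out and specialization argument (this is exactly Bloch--Srinivas) produces an integer $N\geq1$ with $N([\delta]-[x_0])=0$ in $\CH_0(X_{k(X)})$. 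By the localization sequence for Chow groups, there is then a closed subscheme $D\subsetneq X$ of codimension $\geq1$ and a cycle $\Gamma$ supported on $X\times D$ such that
\[ N\Delta \;=\; N\,(\{x_0\}\times X)\;+\;\Gamma \qquad\text{in }\CH^n(X\times X). \]

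I would then let this identity act as a correspondence on $\mathrm{H}^2_{\fppf}(X,\mu_m)$, for every $m\geq1$, using the convention $\Xi_*(\alpha)=(p_2)_*\big([\Xi]\cup p_1^*\alpha\big)$, so that $\Delta_*=\mathrm{id}$. Three observations combine. First, $(\{x_0\}\times X)_*=0$ on $\mathrm{H}^2$, since its class is $p_1^*[x_0]$ with $[x_0]$ in top degree, so $(\{x_0\}\times X)_*(\alpha)=(p_2)_*p_1^*([x_0]\cup\alpha)$ and $[x_0]\cup\alpha$ already vanishes in $\mathrm{H}^{2n+2}$. Second, since $\Gamma$ is supported on $X\times D$ with $\mathrm{codim}\,D\geq1$, the class $[\Gamma]\cup p_1^*\alpha$ is supported on $X\times D$, so $\Gamma_*(\alpha)$ is supported on $D$; thus $\Gamma_*$ has image in the first step $N^1\mathrm{H}^2_{\fppf}(X,\mu_m)$ of the coniveau filtration. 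Third, by the Gersten--Bloch--Ogus resolution (and its $p$-adic, flat analogue) this coniveau piece is exactly the image of the cycle class map $\Pic(X)/m\to\mathrm{H}^2_{\fppf}(X,\mu_m)$.

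Now I feed this into the Kummer sequence
\[ 0\to\Pic(X)/m\to\mathrm{H}^2_{\fppf}(X,\mu_m)\xrightarrow{\ \pi\ }\Br(X)[m]\to0 \]
from Section~\ref{section:cycle_class_map} (valid for arbitrary $m$). The correspondence action is functorial, hence preserves the subgroup $\Pic(X)/m$ and descends to $\Br(X)[m]$, with $\Delta_*=\mathrm{id}$ on all three terms. The displayed decomposition therefore gives, on $\Br(X)[m]$, the identity $N\cdot\mathrm{id}=\overline{\Gamma_*}$; but $\Gamma_*$ maps $\mathrm{H}^2_{\fppf}(X,\mu_m)$ into $\mathrm{im}(\Pic(X)/m)=\ker\pi$, so $\overline{\Gamma_*}=0$. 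Hence $N\cdot\Br(X)[m]=0$ for every $m\geq1$. Since $X$ is regular, $\Br(X)$ is torsion, so every class lies in some $\Br(X)[m]$ and is killed by $N$. Thus $\Br(X)$ is annihilated by $m:=N$, proving the theorem.

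The part of this argument with $m$ prime to $p$ is classical étale cohomology: the correspondence action, the vanishing $(\{x_0\}\times X)_*=0$, and the identification $N^1=\mathrm{im}(\Pic(X)/m)$ are all standard there, and already give that the prime-to-$p$ torsion of $\Br(X)$ is killed by $N$. The main obstacle is the $p$-primary part in characteristic $p>0$: one must run the same formalism in flat cohomology with $\mu_{p^n}$-coefficients, which requires a cycle class map, a Gysin/pushforward formalism, and a Bloch--Ogus-type coniveau description for $\mathrm{H}^\ast_{\fppf}(-,\mu_{p^n})$; these are available through logarithmic de Rham--Witt cohomology but are considerably more delicate than their étale counterparts.
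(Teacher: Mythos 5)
Your proposal is correct and follows essentially the same route as the paper: the Bloch--Srinivas decomposition of the diagonal acting as a correspondence on $\mathrm{H}^2_{\fppf}(X,\mu_m)$ through the Kummer sequence, with the point term acting by zero and the term supported over a divisor dying in $\Br(X)[m]$. The only substantive divergence is how you kill the supported term: you appeal to the coniveau filtration together with a Gersten--Bloch--Ogus identification of $N^1\mathrm{H}^2_{\fppf}(X,\mu_{p^r})$ with the image of $\Pic(X)/p^r$ (the most delicate ingredient you invoke), whereas the paper factors $\Gamma_*$ explicitly through $\mathrm{H}^0_{\fppf}(D,\mu_m)$ followed by the Gysin map $i_*$ for the divisor $D$, whose image visibly lies in the image of $\Pic(X)/m$ --- and in fact only the inclusion $N^1\subseteq\ker\bigl(\mathrm{H}^2_{\fppf}(X,\mu_m)\to\Br(X)\bigr)$ is needed, which follows more cheaply from the injectivity of $\Br(X)\to\Br(X\setminus D)$ for $X$ regular.
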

\begin{proof} 
        Let $n=\dim X$. 
        To prove the result, by a standard specialization argument, it suffices to show that $\Br(X_{\Omega})$ is killed by some integer $m\geq 1$.
       Thus, to prove the theorem, we may and do assume that $X= X_{\Omega}$.
       
       By the decomposition of the diagonal of Bloch and Srinivas \cite{blochsrinivas} there is an
        integer $m\geq 1$ such that in $\CH^n(X\times X)$ we have $$ m\Delta = X\times x + Z,$$ where $x\in X$ is any
        point and $Z\subset X\times X$ is an $n$-cycle whose projection under $\pr_1$ does not dominate $X$
        \cite[Thm.~3.10]{voisin}. In particular, there is a divisor $D$ on $X$ such that $Z$ is supported on $D\times
        X$. Now, start with a class $\beta\in \Br (X) = \mathrm{H}^2_\et(X,\GG_m)$. Under our assumptions, it will be a
        torsion cohomology class. Assume it is $d$-torsion, so that it is an element of $\Br(X)[d]$. From the Kummer
        sequence in the fppf topology (note here that if $(d,p)=1$ then we can work in the \'etale topology), we have a
        short exact sequence $$ 0\to \Pic(X)/d\Pic(X)\to \mathrm{H}^2_\fppf(X, \mu_d) \to \Br(X)[d] \to 0 $$ where we
        have used that $\mathrm{H}^1_\fppf(X, \GG_m) = \mathrm{H}^1_\et(X, \GG_m) = \Pic X$ and likewise for second
        cohomology of $\GG_m$ by a theorem of Grothendieck saying that fppf and \'etale cohomology agree for smooth
        groups. Let $\alpha\in \mathrm{H}^2_\fppf(X,\mu_d)$ be any class in the preimage of $\beta$. (If $(d,p)=1$, then
        the group $\mathrm{H}^2_\et(X,\mu_d)$ is finite, and hence so is $\Br(X)[d]$. However, $\mathrm{H}^2_\fppf(X,
        \mu_p)$ does not have to be finite.) Each element $\gamma$ in $\CH^n(X\times X)$ is a correspondence on $X$, and
        therefore induces a morphism $\gamma^*:\mathrm{H}^i_\fppf(X,\mu_d)\to \mathrm{H}^i_\fppf(X,\mu_d)$ given by
        $\gamma^*\alpha:=\pr_{2*}(\pr_1^*\alpha\cup [\gamma])$ where $[\gamma]\in \mathrm{H}^{2n}_\fppf(X\times
        X,\mu_d)$ is the image of $\gamma$ under the cycle class map $$\cl: \CH^n(X\times X)\to
        \mathrm{H}^{2n}_\fppf(X\times X,\mu_d);$$ see Remark \ref{rem:appendix} below. In particular, the diagonal
        induces the identity morphism on $\mathrm{H}^2_\fppf(X,\mu_d)$, whereas the class $X\times x$ is the zero map.
        We thus have $$ m\alpha = [Z]^*\alpha. $$ Let now $p,q$ be the projections of $Z$ onto $D$ and $X$ respectively.
        Hence, if $i:D\to X$ denotes the inclusion, then we have proven that multiplication by $m$ on
        $\mathrm{H}^2_\fppf(X,\mu_d)$ factors as follows:
	$$ 
	\xymatrix{
		& \mathrm{H}^0_\fppf(D, \mu_d)\ar[dr]^{i_*} & \\
		\mathrm{H}^2_\fppf(X,\mu_d)\ar[rr]^m \ar[d]\ar[ur]^{p_*q^*} & & \mathrm{H}^2_\fppf(X,\mu_d)\ar[d] \\
		\Br(X)[d]\ar[rr]^m & & \Br(X)[d].
	}
	$$
	The map $i_*$ maps a multiple of the fundamental class of $D$ (a divisor) to its Chern class, so in particular the
	image of $i_*$ is contained in the image of $\Pic(X)/d\Pic(X)$. Thus,  when projected down to $\Br(X)[d]$, the image of $i_*$ must be zero.
	In other words, every $d$-torsion class in the Brauer group of $X$ is killed by $m$. 
\end{proof} 

\begin{remark}
	If $X$ is a non-supersingular K3 surface over an algebraic closure  $\bar{\mathbb F_p}$ of $\mathbb F_p$, then $\CH_0(X) = \ZZ$. (Let $A_0(X)$ be the Chow group of zero cycles of degree zero on $X$. By    \cite{MilneRojtman, Roitman}, the natural map $A_0(X) \to \mathrm{Alb}(X)(\bar{\mathbb F_p})$ induces an isomorphism on torsion subgroups. Thus, as the Albanese of $X$ is trivial,  the group $A_0(X)$ is torsion-free. However, as $A_0(X)$ is  a torsion abelian group, we  conclude that $A_0(X) =0$ and thus $\mathrm{CH}_0(X) =\ZZ$.)    Now, since $X$ is non-supersingular, there is no integer $d\geq 1$ such that the Brauer group of $X$ is killed by $d$.    
\end{remark}

\begin{remark}\label{rem:appendix}
        Let $d$ be a positive integer. Note that  the Kummer sequence $0\to \mu_d\to\GG_m\to\GG_m\to0$ is exact in the fppf topology. In
        particular, every class in $\Br(X)[d]$ comes from a class in $\mathrm{H}^2_\fppf(X,\mu_d)$. Now, even though the latter group
        does not have to be finite, we do have cycle class maps in the fppf topology. We could not find an explicit
        reference for this, but taking a locally free resolution of the ideal sheaf of a subvariety, using the existence
        of a $c_1$ map for line bundles in the fppf topology and extending by linearity and cup product gives Chern
        classes and hence a cycle class map into $\mathrm{H}^{2i}_{\mathrm{fppf}}(X, \mu_d)$. 
\end{remark}

\begin{remark} Note that the assumption in Theorem \ref{thm:appendix} holds for a rationally chain connected smooth projective variety $X$ over $k$.\end{remark}

\begin{remark}  The fact that the prime-to-$p$ part of the Brauer group of a rationally chain connected variety $X$ is killed by some integer $d\geq 1$ can also be deduced from   work of Colliot-Th\'el\`ene \cite{CTinv} (see also \cite[Theorem 1.4 and Lemma 1.7]{Auel}).
\end{remark}

\begin{remark}[Colliot-Th\'el\`ene]
	Salberger has proven (unpublished) the following generalization of Theorem \ref{thm:appendix}. Let $X$ be a smooth projective variety over an algebraically closed field $k$. Assume that there is a curve $C$ over $k$ and a morphism $C\to X$ such that, for any algebraically closed field $\Omega$ containing $k$, the induced morphism $\CH_0(C_\Omega)\to \CH_0(X_\Omega)$ is surjective. Then, there exists an integer $m\geq 1$ such that $\Br(X)$ is $m$-torsion.
\end{remark}

\bibliography{refs}{}
\bibliographystyle{plain}

\end{document}